\numberwithin{equation}{section}
\theoremstyle{plain}
\newtheorem{theorem}{Theorem}[section]
\newtheorem{lemma}{Lemma}[section]
\newtheorem{proposition}[theorem]{Proposition}
\theoremstyle{definition}
\newtheorem{definition}{Definition}[section]
\theoremstyle{remark}
\newtheorem{remark}{Remark}[section]
\newtheorem{algorithm}{\normalfont\textsc{Algorithm}}
\newcommand{\R}{\mathbb{R}}
\newcommand{\norm}[1]{\|#1\|}
\title{A splitting proximal point method for Nash-Cournot equilibrium models involving nonconvex cost functions}
\author{Tran Dinh Quoc\footnote{Hanoi University of Science, Hanoi, Vietnam.\newline\textit{Present address}: Department of Electrical Engineering, ESAT-SCD and OPTEC, K.U.Leuven, Belgium. 
\newline\textit{Email}: quoc.trandinh@esat.kuleuven.be} \and Le Dung Muu\footnote{Institute of Mathematics, Hanoi, Vietnam.\newline\textit{Email}: ldmuu@math.ac.vn}}
\begin{document}

\maketitle

\begin{abstract}
Unlike convex case, a local equilibrium point of a nonconvex Nash-Cournot oligopolistic equilibrium problem may not be a global one. Finding such a \textit{local equilibrium point} or even a
\textit{stationary point} of this
problem is not an easy task. This paper deals with a numerical method for Nash-Cournot equilibrium models involving nonconvex cost functions.
We develop a local method to compute a stationary point of this class of problems. 
The convergence of the algorithm is proved and its complexity is estimated under certain assumptions.
Numerical examples are implemented to illustrate the convergence behavior of the proposed algorithm. 

\noindent\textbf{Keywords. } Nonconvex Cournot-Nash models, splitting proximal point method, local equilibria, gradient mapping.
\end{abstract}

\section{Introduction}\label{sec:intro}
Nash-Cournot oligopolistic equilibrium  models have been widely applied in economics, electricity markets, transportation, networks as well as in environments. Such a model can be formulated as a game
model, where each player has  a profit function which can be expressed as  the difference of its  price and cost functions. In classical models, the price function is affine while the cost
function is assumed to be convex. In this case, a local  equilibrium point is also a global one. Mathematical programming and variational inequality approaches can be used to treat this problem (see,
e.g., \cite{Contreras2004, Facchinei2003, Harker1984, Konnov2000, Murphy1982}).
  
In practical models, the cost per a unit usually decreases as the production level increases. This situation requires a nonconvex function to represent the production cost of the model.
In \cite{Muu2008}, a global optimization algorithm has been developed to find a global equilibrium point to the Nash-Cournot oligopolistic equilibrium market  model  involving piecewise concave
cost functions. However, global algorithms only work well for the problems of moderate size, while it becomes intractable when the size of the problem increases, except for special structures
are exploited. 

In this paper, we continue the work in \cite{Muu2008} by proposing a local solution method for finding a stationary point of Nash-Cournot equilibrium models with nonconvex (not necessarily
concave) cost functions. We consider a Nash-Cournot model involving an affine price function and nonconvex smooth production cost functions. With this structure, the cost function of the model can
be decomposed as the sum of a convex quadratic function and a nonconvex smooth function. 
Then, we develop a local method for finding a stationary point of such a model. The method is called a splitting proximal point algorithm.
The main idea of this algorithm is to preserve the convexity of the problem while convexifies the nonconvex part by linearizing it around each iteration point.

Proximal point methods have been well developed in optimization as well as in nonlinear analysis. Myriad of research papers concerned to these methods were published (see, e.g.,
\cite{Lewis2008,Martinet1970,Rockafellar1976,Rockafellar1997} and the references quoted therein). 
However, all these papers only deal with a class of monotone problems.
Recently, Pennanen in \cite{Pennanen2002} extended the proximal point method to nonmonotone cases for solving variational inequality problems. Lewis \cite{Lewis2008} further generalized this
algorithm in a unified framework using the prox-regularity concept \cite{Rockafellar1997}. 
A main point of the proximal point methods is to choose the proximal parameter sequence. 
This affects to the performance of the algorithm as well as its global convergence behavior. G\"{u}ler in \cite{Guler1991} investigated the rate of global convergence of the classical proximal
point methods for convex programs. The worst case complexity bound of this method is $O(1/k)$, where $k$ the number of iterations. The author further accelerated the classical proximal point method
to get a better complexity bound for the convex programming problems, precisely, $O(1/k^2)$ \cite{Guler1992} by using the idea of Nesterov for gradient methods \cite{Nesterov2004}.
A splitting proximal point are also developed in many research papers. Such a method was applied to optimization problems by Mine and Fukushima in \cite{Mine1981} and, recently, by Nesterov in
\cite{Nesterov1996}.

This paper contributes a new local method for finding a stationary point of a Nash-Cournot equilibrium models involving the nonconvex cost functions. The algorithm called splitting proximal point
method is presented and its convergence is investigated. The global worst case complexity bound is also provided, which is $O(1/\sqrt{k})$, where $k$ is the iteration counter. To our knowledge, this
is the first estimation proposed to Nash-Cournot equilibrium model.

The rest of the paper is organized as follows. Section \ref{sec:MVIP} presents a formulation of a Nash-Cournot oligopolistic  equilibrium model involving nonconvex cost function. The problem is
reformulated as a mixed variational inequality. In Section \ref{sec:LocalEP}, we define three concepts including local and global equilibria, and stationary points of the Nash-Cournot
equilibrium model.  Section \ref{sec:GDmap} deals with a gradient mapping and its properties. The splitting algorithm is described in Section \ref{sec:LocalMethod}, where its convergence is
proved and the worst-case complexity is estimated. Two numerical examples are implemented in the last section.
   
\section{ Mixed variational inequality formulation} \label{sec:MVIP}
We consider a Nash-Cournot  oligopolistic equilibrium market models with $n$-firms producing a common homogeneous commodity in a non-cooperative fashion. The price $p$ of production depends on
the total quantity $\sigma:= \sum_{i=1}^n x_i$ of the commodity. Let  $h_i(x_i)$ denote the cost of the firm $i$ when its production level is $x_i$. Suppose that the profit of  firm $i$ is given as
\begin{equation}\label{eq:f_i_function}
f_i(x_1,\cdots, x_n) := x_ip\big(\sum_{i = 1}^n x_i\big) - h_i(x_i) ,~~i = 1,\dots, n,
\end{equation}
where $h_i$ is the cost function of the firm $i$ which is assumed to only depend on its production level.

Let $ C_i \subset \mathbb{R}$ $(i = 1,\dots, n)$ denote the strategy set of the firm $i$, which is assumed to be closed and convex. Each firm seeks on its strategy set to maximize the profit by
choosing the corresponding production level under the presumption that the production of the other firms are parametric inputs. In this context, a Nash equilibrium is a production pattern in which no
firm can increase its profit by changing its controlled variables.
Thus under this equilibrium concept, each firm determines its best response given other firms' actions. 
Mathematically, a point  $x^{*}=(x^{*}_1,\dots,x^{*}_n)^T \in  C:= C_1\times \dots\times C_n$ is
said to be a Nash equilibrium point  if
\begin{equation}\label{eq:nash_optima}
 f_i(x^{*}_1, \cdots, x^{*}_{i-1}, y_i, x^{*}_{i+1}, \cdots, x^{*}_n) \leq f_i(x^{*}_1,\cdots,x^{*}_n),~ \forall y_i\in C_i ~(i=1,\dots, n).
\end{equation}

When $h_i$ is affine, this market problem can be formulated as a special Nash equilibrium problem in the $n$-person non-cooperative game model, which in turns is a strongly monotone variational
inequality (see, e.g., \cite{Konnov2000}). 

\noindent Let us define
\begin{equation}\label{eq:psi_function}
\Psi(x, y) := - \sum_{i = 1}^nf_i(x_1, \cdots, x_{i-1}, y_i, x_{i+1}, \cdots, x_n),
\end{equation}
and
\begin{equation}\label{eq:phi_function_app}
\phi(x, y)  := \Psi(x, y) - \Psi(x, x).
\end{equation}
Then, as proven in \cite{Konnov2000}, the problem of finding an equilibrium point of this model can be reformulated as the following equilibrium problem:
\begin{equation}\label{eq:EP_problem}
\textrm{Find } ~ x^{*}\in C ~\textrm{such that:}~
\phi(x^{*}, y) \geq 0 ~\textrm{for all}  ~y \in C. \tag{EP}
\end{equation}
This generalized setting was proposed by Blum and Oettli in \cite{Blum1994} (see also in \cite{Muu2008}).

In classical Cournot models \cite{Facchinei2003, Konnov2000}, the price and the cost functions for each firm are assumed to be affine and given as follows:
\begin{align}
& p(\sigma) = \alpha_0 - \beta \sigma, \quad \alpha_0 \geq 0,\quad \beta > 0,\quad \hbox{with} \; \sigma = \sum_{i=1}^n x_i, \label{eq:price_func}\\
&h_i(x_i) = \mu_ix_i + \xi_i,\quad \mu_i > 0, \quad \xi_i \geq 0 ~~ (i = 1, \dots, n). \label{eq:lncost_func} 
\end{align} 
In this case, using \eqref{eq:f_i_function}, \eqref{eq:nash_optima}, \eqref{eq:psi_function} and \eqref{eq:phi_function_app}, it is easy to check that 
\begin{equation*}
\phi(x, y) = (\tilde{B}x + \mu - \alpha)^T(y - x) + \frac{1}{2}y^TBy - \frac{1}{2}x^TBx,
\end{equation*}
where 
\begin{align*}
&B = \begin{bmatrix} 
2\beta\! &0\!&0\!&\cdots\!&0\! \\
0\! & 2\beta\! &0 \! & \cdots\! &0\! \\
\cdots\! &\cdots\! &\cdots \! &\cdots \! &\cdots \! \\
0\! &0\! &0\! &0\! &2\beta\! 
\end{bmatrix}; ~
\tilde{B} = \begin{bmatrix} 
0\! &\beta\!  &\beta\!  &\cdots\! &\beta\! \\
\beta\! &0\! &\beta \! &\cdots\! &\beta\! \\
\cdots\! &\cdots \! &\cdots\! &\cdots\! &\cdots\! \\
\beta\! &\beta\! &\beta\! &\cdots\! & 0\!
\end{bmatrix},\\
&\alpha  = (\alpha_0, \dots, \alpha_0)^T,  ~\textrm{and} ~~ \mu = (\mu_1, \dots, \mu_n)^T.
\end{align*}
Then the problem of finding a Nash equilibrium point can be formulated as a mixed variational inequality of the form:
\begin{align}\label{eq:LMVIP}
&\textrm{Find} ~x^{*} \in C ~\textrm{such that:} \notag\\
&~~(\tilde{B}x^{*}\! + \! \mu \! - \! \alpha)^T(y \! - \! x^{*}) \! + \! \frac{1}{2}y^T B y \! - \! \frac{1}{2}(x^{*})^T B x^{*} \geq 0, ~\forall y \in C. 
\end{align}
Let $Q := B + \tilde B$. Since $\beta > 0$ and matrices $\tilde{B}$ and $B$ are symmetric, it is clear that $Q$ is symmetric and positive definite. This mixed variational inequality can be
reformulated equivalently  to the following strongly convex quadratic programming problem:
\begin{equation}\label{eq:QP_prob}
\min_{x\in U}\big\{\frac{1}{2}x^TQx + (\mu - \alpha)^Tx\big\}. \tag{QP}
\end{equation}
Hence, problem \eqref{eq:QP_prob} has a unique optimal solution, which is also the unique equilibrium point of the classical oligopolistic  equilibrium  market model.
 
The oligopolistic market equilibrium models, where the profit functions $f_i$ ($i= 1,\dots,n$) of each firm is assumed to be differentiable and convex with respect to its production level $x_i$ while
the other production levels are fixed, are studied in \cite{Facchinei2003} (see also \cite{Konnov2000}). This convex model is reformulated equivalently to a monotone variational inequality.

In practical models, the production cost function $h_i$ assumed to be affine is no longer satisfied. Since the cost per a unit of the action does decrease when the quantity of the
commodity exceeds a certain amount. Taking into account this fact, in the sequel, we consider market equilibrium models where the cost function $h$ may not be convex, whereas the price function is
affine as in \eqref{eq:price_func}.
Typically, the cost function $h$ is given as:
\begin{equation}\label{eq:h_cost_func}
h(x) := \sum_{i = 1}^nh_i(x_i),
\end{equation}
where $h_i$ $(i=1, \dots, n)$ is differentiable and nonconvex. 

Let us denote by
\begin{eqnarray}
&&F(x) := \tilde{B}x - \tilde{\alpha} \nonumber\\
&&\varphi(x) := g(x) - h(x), \nonumber
\end{eqnarray}
where $\tilde{\alpha} := \alpha-\mu$, $g(x) := \frac{1}{2}x^TBx$ and $h(x)$ defined as \eqref{eq:h_cost_func}.

Obviously, matrix $\tilde{B}$ is symmetric, using the same notation $\tilde B$, $B$ and $\alpha$ as in \eqref{eq:LMVIP}, we can formulate the nonconvex Nash-Cournot equilibrium model as a mixed
variational inequality:
\begin{align}\label{eq:ncMVIP}
&\textrm{Find} ~ x^{*}\in C ~ \textrm{such that:} \notag\\
&~~~F(x^{*})^T(y - x^{*})  +  \varphi(y) - \varphi(x^{*})  \geq 0  ~\textrm{for all}~  y\in C.  \tag{\textrm{ncMVIP}}
\end{align}
Note that mixed variational inequality problems of the form  \eqref{eq:ncMVIP}, where $\varphi$ is convex, i.e. $h$ is concave, were extensively studied in the literature (see, e.g.,
\cite{Anh2005,Facchinei2003,Fukushima1992,Konnov2000,Konnov2001,Muu2009,Noor2001,Salmon2004}).

\begin{remark}\label{re:nonconvex}
If the function $\varphi$ is convex and differentiable then problem \eqref{eq:ncMVIP} can be reformulated equivalently to a classical variational inequality problem. 
More generally, it can be converted to a generalized variational inequality problem when $\varphi$ is convex and subdifferentiable (see, e.g. \cite{Konnov2000}).
However, the mixed variational inequality \eqref{eq:ncMVIP} can not equivalently transform into a variational inequality if $\varphi$ is nonconvex.
\end{remark}
Note, if we define $\phi(x,y) := F(x)^T(y-x) + \varphi(y) - \varphi(x)$ then problem \eqref{eq:ncMVIP} coincides with a nonconvex equilibrium problem of the form \eqref{eq:EP_problem}.

\section{Local equilibria and stationary points}\label{sec:LocalEP}
Unlike the convex case, if the cost function $\varphi$ of the problem \eqref{eq:ncMVIP} is nonconvex, it may not have a global equilibria even if $C$ is compact, and $F$ and $\varphi$ are continuous.
Indeed, let us consider $C := [-1, 1]\subset \mathbb{R}$, $F(x) := x$ and $\varphi(x) = -\frac{1}{2}x^2$, which is concave, then $F(x)^T(y-x) + \varphi(y)-\varphi(x) = -\frac{1}{2}(y-x)^2$.
Therefore, problem \eqref{eq:ncMVIP} corresponding to this function has no solution.
 
For a given $x\in C$, let $\mathbb{B}(x, r)$ be an open ball centered at $x$ of radius $r > 0$ in $\mathbb{R}^n$.  
 Borrowing the concepts from classical optimization, we firstly propose a local equilibria and critical points (or stationary points) of the mixed variational inequality \eqref{eq:ncMVIP}.
 
\begin{definition}
A point $x^{*}\in C$ is called a local solution (or local equilibria) to \eqref{eq:ncMVIP} if there exists a  ball $\mathbb{B}(x^{*}, r)$   such that
\begin{equation}\label{eq:local_sol}
F(x^{*})^T(y - x^{*})  +  \varphi(y) - \varphi(x^{*})  \geq 0  ~\textrm{for all}~  y\in C\cap \mathbb{B}(x^{*}, r).
\end{equation}
If $C\subseteq \mathbb{B}(x^{*}, r)$ then $x^{*}$ is called a global solution (or global equilibria) to \eqref{eq:ncMVIP}.
\end{definition}

Let 
\begin{equation}\label{eq:q_function}
\psi(x; y) := (\tilde Bx - \tilde{\alpha})^T(y - x) + \frac{1}{2}y^TBy - h(y).
\end{equation}
We consider a function $m : C \times \mathbb{R}_{++} \to \mathbb{R}$ and a mapping 
$S : C\times \mathbb{R}_{++} \rightrightarrows 2^C$ defined as follows:
\begin{eqnarray}
&&m(x; r) := \min\left\{ \psi(x; y) ~|~ y \in C \cap \mathbb{\overline{B}}(x, r) \right\}, \label{eq:m_U_map}\\
&&S(x; r) := \textrm{arg}\!\min\left\{  \psi(x; y) ~|~ y \in C\cap \mathbb{\overline{B}}(x, r) \right\}, \label{eq:S_U_map}
\end{eqnarray}
where $ \mathbb{\overline{B}}(x, r)$ stands for the closure of the open ball  $\mathbb{B}(x, r) $.
As usual, we  refer to $m$ as a local gap  function for problem \eqref{eq:ncMVIP}. 
Obviously, if $h$ is continuous and $C$ is compact then the function $m$ as well as the mapping $S$ are well-defined. If $h$ is concave then $S$ is reduced to a single valued
mapping due to the symmetric and positive definiteness of $B$.

The following proposition gives a necessary and sufficient conditions for a point to be a local or global solution to \eqref{eq:ncMVIP}.

\begin{proposition}\label{pro:gap_function} 
The function $m$ defined by \eqref{eq:m_U_map} satisfies $m(x) \leq 0$ for all $x\in C$.
Moreover, the following statements are equivalent:
\begin{itemize}
\item[]\textrm{a)} $x^{*}$ is a local solution to  \eqref{eq:ncMVIP};
\item[]\textrm{b)} There exists $\bar{r} > 0$ and $x^{*} \in C$ such that $m(x^{*}; \bar{r}) = 0$;
\item[]\textrm{c)} There exists $\bar{r} > 0$ and $x^{*} \in C$ such that $x^{*}\in S(x^{*}; \bar{r})$.
\end{itemize}
\end{proposition}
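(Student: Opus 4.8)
The plan is to base everything on a single algebraic identity linking the auxiliary function $\psi$ to the bifunction governing \eqref{eq:ncMVIP}. Writing $\phi(x,y):=F(x)^T(y-x)+\varphi(y)-\varphi(x)$, a direct substitution of $F(x)=\tilde Bx-\tilde\alpha$ and $\varphi=g-h$ shows that $\phi(x,y)=\psi(x;y)-\psi(x;x)$ and, in particular, $\psi(x;x)=\tfrac12 x^TBx-h(x)=\varphi(x)$. The point of this identity is that, for a \emph{fixed} $x$, the maps $y\mapsto\psi(x;y)$ and $y\mapsto\phi(x,y)$ differ only by the $y$-independent constant $\psi(x;x)$. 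Consequently they have exactly the same minimizers over $C\cap\overline{\mathbb{B}}(x,r)$, so $S(x;r)$ is simultaneously the argmin of $\psi(x;\cdot)$ and of $\phi(x,\cdot)$, while the optimal values satisfy $m(x;r)-\varphi(x)=\min\{\phi(x,y):y\in C\cap\overline{\mathbb{B}}(x,r)\}$. I would therefore phrase the whole argument in terms of this local gap $\min_y\phi(x,\cdot)$, which is the quantity that is nonpositive and vanishes exactly at local solutions.

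First I would record the nonpositivity. Since $x\in C$ and $\norm{x-x}=0<r$, the point $x$ itself is feasible for the minimization defining $m(x;r)$, whence $\min_y\phi(x,y)\le\phi(x,x)=0$; this is the asserted inequality. For the equivalences I would run the cycle a)$\Rightarrow$b)$\Rightarrow$c)$\Rightarrow$a). For a)$\Rightarrow$b), suppose $x^{*}$ is a local solution with radius $r$, so $\phi(x^{*},y)\ge0$ on $C\cap\mathbb{B}(x^{*},r)$. Choosing any $\bar r$ with $0<\bar r<r$ gives $\overline{\mathbb{B}}(x^{*},\bar r)\subset\mathbb{B}(x^{*},r)$, so the minimum of $\phi(x^{*},\cdot)$ over $C\cap\overline{\mathbb{B}}(x^{*},\bar r)$ is $\ge0$; combined with the nonpositivity just proved it equals $0$, i.e. $m(x^{*};\bar r)=0$ in the gap normalization.

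For b)$\Rightarrow$c), the value $0$ of the local gap is attained at $y=x^{*}$ because $\phi(x^{*},x^{*})=0$ and $x^{*}\in C\cap\overline{\mathbb{B}}(x^{*},\bar r)$; hence $x^{*}$ is a minimizer, i.e. $x^{*}\in S(x^{*};\bar r)$. For c)$\Rightarrow$a), membership $x^{*}\in S(x^{*};\bar r)$ means $\psi(x^{*};y)\ge\psi(x^{*};x^{*})$, equivalently $\phi(x^{*},y)\ge0$, for every $y\in C\cap\overline{\mathbb{B}}(x^{*},\bar r)$; restricting to the open ball $C\cap\mathbb{B}(x^{*},\bar r)$ yields precisely \eqref{eq:local_sol} with $r=\bar r$, so $x^{*}$ is a local solution. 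I expect the only genuinely delicate point to be the bookkeeping between the open ball used in the definition of a local solution and the closed ball used to define $m$ and $S$: the shrink $\bar r<r$ in a)$\Rightarrow$b) and the reverse inclusion in c)$\Rightarrow$a) are what make the three ``there exists $\bar r$'' statements mutually compatible. Everything else reduces to the constant-offset identity of the first paragraph together with the trivial feasibility of $x^{*}$ itself.
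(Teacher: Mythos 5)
Your proof is correct and follows essentially the same route as the paper's: everything is reduced to the observation that $x^{*}$ itself is feasible for the minimization defining $m$ and $S$, so the local-solution inequality, the vanishing of the gap, and membership in the argmin are all the same statement up to shrinking the open ball of radius $r$ to a closed ball of radius $\bar r\le r$ and back. The one substantive difference is that you quietly repair a flaw in the paper: the published proof rests on the assertion that $\psi(x,x)=0$, which is false for $\psi$ as defined in \eqref{eq:q_function}, since $\psi(x;x)=\tfrac12 x^TBx-h(x)=\varphi(x)$. Your identity $\phi(x,y)=\psi(x;y)-\psi(x;x)$ makes clear that the claims $m(x)\le 0$ and $m(x^{*};\bar r)=0$ are only literally true after subtracting the $y$-independent constant $\varphi(x)$ (your ``gap normalization''), while statement c) is unaffected because adding a constant does not change the argmin. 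With that normalization made explicit, your cycle a)$\Rightarrow$b)$\Rightarrow$c)$\Rightarrow$a) and the open-ball/closed-ball bookkeeping are exactly what the proposition needs, and arguably cleaner than the paper's two-part argument (b)$\Leftrightarrow$c) ``trivial'' plus a)$\Leftrightarrow$b).
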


\begin{proof}
Note that $\psi(x, x) = 0$ for all $x\in C$, and for any $x\in C$ and $\bar{r} > 0$, $x\in C\cap\mathbb{B}(x,\bar{r})$. Therefore, from the definition
\eqref{eq:m_U_map} of $m$, it is clear that, with a given $\bar{r} > 0$, $m(x; \bar{r}) = \min\left\{\psi(x, y) ~|~ y\in C\cap\mathbb{B}(x,\bar{r})\right\}\leq \psi(x,x) = 0$ for every $x\in C$. 
The equivalence bewteen b) and c) is trivial. We only prove that a) is equivalent to b).

Suppose that there exists $\bar{r} > 0$ and $x^{*}\in C$ such that $m(x^{*}, \bar{r}) = 0$. It follows from the definition of $m$ that $\psi(x^{*}, y) \geq \psi(x^{*}, x^{*}) = 0$ for all $x\in
\mathbb{\overline{B}}(x^{*},\bar{r})\cap C$. In particular, $\psi(x^{*}, y) \geq 0$ for all $y\in C\cap\mathbb{B}(x^{*}, \bar{r})$. Thus $x^{*}$ is a local equilibria of
\eqref{eq:ncMVIP}.
Conversely, if $x^{*}$ is a local equilibria of \eqref{eq:ncMVIP} then there exists a neighbourhood $\mathbb{B}(x^{*}, r)$ such that $r> 0$ and
\begin{equation*}
F(x^{*})^T(y - x^{*}) +  \varphi(y) -\varphi(x^{*}) \geq F(x^{*})^T(x^{*} - x^{*}) +  \varphi(x^{*}) -\varphi(x^{*}) = 0, ~\forall y\in C\cap\mathbb{B}(x^{*}, r).
\end{equation*}
Since $r>0$ and $\mathbb{B}(x^{*},r)\subset\mathbb{R}^n$, there exists $0 < \bar{r} \leq r$ such that $\mathbb{\overline{B}}(x^{*}, \bar{r})\subseteq \mathbb{B}(x^{*}, r)$. This relation shows that
the last inequality holds for all $y\in C\cap\mathbb{\overline{B}}(x^{*},\bar{r})$. Therefore, $m(x^{*}, \bar{r}) = 0$. 
\end{proof}

Clearly, if the conclusions of Proposition \ref{pro:gap_function} hold for a given $\bar{r} > 0$ and $C \subseteq \mathbb{B}(x^{*}, \bar{r})$ then $x^{*}$ is a global solution to
\eqref{eq:ncMVIP}. 
 
Next, let us define
\begin{equation}\label{eq:feas_dir_cone}
\mathcal{F}_C(x) := \left\{d:=t(y-x) ~| ~ y\in C, ~ t\geq 0 \right\}, 
\end{equation}
a cone of all feasible directions of $C$ starting from $x \in C$. The dual cone of $\mathcal{F}_C(x)$ is the normal cone of $C$ at $x$ which is defined as
\begin{equation}\label{eq:normal_cone}
N_C(x) := \begin{cases} \left\{ w \in \mathbb{R}^n ~|~ w^T(y-x)\geq 0, ~~ y\in C\right\}, &\text{if}~ x\in C,\\
           \emptyset, &\text{otherwise}.
          \end{cases}
\end{equation}
By Proposition \ref{pro:gap_function}, a point $x\in C$ is a local solution to \eqref{eq:ncMVIP} if and only if  it solves the following optimization problem:
\begin{equation}\label{eq:nonconv_prob}
x \in \textrm{arg}\!\min\left\{(\tilde Bx-\alpha)^T(y-x) + \frac{1}{2}y^TBy - h(y) ~|~ y\in C \cap \mathbb{\overline{B}}(x, \bar{r}) \right\},
\end{equation}
for some $\bar{r} > 0$.
Since $h$ is not necessarily concave,  finding  such a point $x$ satisfying \eqref{eq:nonconv_prob}, in general, is not an easy task.
In this paper, we concentrate in finding a stationary point rather than local equilibrium. We develop a method to find such a point for \eqref{eq:ncMVIP}.  
Borrowing the concept of \textit{stationary points} in optimization, we define a stationary point (or a \textit{critical point}) for the mixed variational inequality \eqref{eq:ncMVIP} as follows. 

\begin{definition}\label{de:stationary_point}
A point $x \in C$ is called a stationary point (or critical point) to the problem  \eqref{eq:ncMVIP}  if  
\begin{equation}\label{eq:stationary_point}
0\in  Qx - \tilde{\alpha} - \nabla h(x) + N_C(x),
\end{equation} 
where $N_C$ is defined by \eqref{eq:normal_cone} and $Q := \tilde{B}  + B$.
\end{definition}
Since $N_C$ is a cone, for any $c > 0$, the inclusion \eqref{eq:stationary_point} is equivalent to
\begin{equation}\label{eq:tmp1}
0\in  c [ (\tilde B + B)x -\tilde{\alpha} - \nabla h(x)] + N_C(x).
\end{equation}
Let 
\begin{equation}\label{eq:dir_derivative}
D\phi(x; d) := [(\tilde{B} + B)x - \tilde{\alpha} - \nabla h(x)]^Td,
\end{equation}
for any $x\in C$ and $d\in\mathcal{F}_C(x)$.
Then the condition \eqref{eq:stationary_point} is equivalent to
\begin{equation}\label{eq:first_order}
D\phi(x^{*}; d) \geq 0, ~~ \forall d\in\mathcal{F}_C(x^{*}). 
\end{equation}
Let us denote by $S^{*}$ the set of stationary points of \eqref{eq:ncMVIP}.
The following lemma shows that every local equilibria of problem \eqref{eq:ncMVIP} is its stationary point. The proof is simple and short, we present here for reading convenience.

\begin{lemma}\label{le:FONC}
Suppose that $h$ is continuously differentiable on its domain. Then, every local equilibria is a stationary point of problem \eqref{eq:ncMVIP}.
\end{lemma}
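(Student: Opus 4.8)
The plan is to show that the local-equilibrium inequality \eqref{eq:local_sol} forces the first-order condition \eqref{eq:first_order}, which by the discussion preceding Definition \ref{de:stationary_point} is equivalent to the stationary-point inclusion \eqref{eq:stationary_point}. First I would take a local solution $x^*$, so that there is a radius $r > 0$ with $F(x^*)^T(y-x^*) + \varphi(y) - \varphi(x^*) \ge 0$ for every $y \in C \cap \mathbb{B}(x^*, r)$. Then I fix an arbitrary feasible direction $d \in \mathcal{F}_C(x^*)$; by \eqref{eq:feas_dir_cone} it suffices to treat $d = y' - x^*$ with $y' \in C$, since the general case follows by nonnegative scaling. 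For $t \in (0,1]$ small enough that $x^* + td \in \mathbb{B}(x^*, r)$, convexity of $C$ gives $x^* + td = (1-t)x^* + ty' \in C$, so $y = x^* + td$ is an admissible test point.

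The next step is to expand the left-hand side of \eqref{eq:local_sol} in $t$. Writing $\varphi = g - h$ with $g(x) = \tfrac{1}{2} x^T B x$, the quadratic term expands exactly, using symmetry of $B$, as $g(x^* + td) - g(x^*) = t(Bx^*)^T d + \tfrac{t^2}{2} d^T B d$, while continuous differentiability of $h$ yields $h(x^* + td) - h(x^*) = t\nabla h(x^*)^T d + o(t)$. Substituting these together with $F(x^*) = \tilde{B}x^* - \tilde{\alpha}$ into \eqref{eq:local_sol} and collecting terms gives
\begin{equation*}
t\big[(\tilde{B}+B)x^* - \tilde{\alpha} - \nabla h(x^*)\big]^T d + \frac{t^2}{2} d^T B d - o(t) \ge 0.
\end{equation*}
Dividing by $t > 0$ and letting $t \downarrow 0$ causes the $\tfrac{t}{2} d^T B d$ and $o(t)/t$ terms to vanish, leaving $[(\tilde{B}+B)x^* - \tilde{\alpha} - \nabla h(x^*)]^T d \ge 0$, i.e. $D\phi(x^*; d) \ge 0$ in the notation of \eqref{eq:dir_derivative}.

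Since $d \in \mathcal{F}_C(x^*)$ was arbitrary, this establishes \eqref{eq:first_order}, and hence \eqref{eq:stationary_point}, so $x^*$ is a stationary point. I expect the only delicate point to be the treatment of the remainder from $h$: it is precisely the hypothesis that $h$ is continuously differentiable (rather than merely admitting directional derivatives) that guarantees $h(x^*+td)-h(x^*) = t\nabla h(x^*)^T d + o(t)$ with a remainder that is genuinely $o(t)$ as $t \downarrow 0$, so that the division-and-limit argument goes through cleanly. The quadratic part needs no such care because its Taylor expansion terminates, and indeed the $\tfrac{t^2}{2}d^T B d$ term drops out in the limit regardless of its sign.
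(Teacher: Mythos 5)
Your proof is correct, but it takes a more elementary route than the paper. The paper's argument first invokes Proposition \ref{pro:gap_function} to recast the local-equilibrium condition as saying that $x^{*}$ locally minimizes the smooth function $y \mapsto (\tilde{B}x^{*}-\tilde{\alpha})^T(y-x^{*}) + \frac{1}{2}y^TBy - h(y)$ over $C\cap\mathbb{\overline{B}}(x^{*},\bar{r})$, then cites the standard first-order necessary optimality condition for smooth constrained minimization to obtain the inclusion $0\in (\tilde{B}+B)x^{*}-\tilde{\alpha}-\nabla h(x^{*}) + N_{C\cap\mathbb{\overline{B}}(x^{*},\bar{r})}(x^{*})$, and finally identifies $N_{C\cap\mathbb{\overline{B}}(x^{*},\bar{r})}(x^{*})$ with $N_C(x^{*})$ because $x^{*}$ lies in the interior of the ball. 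You instead prove the needed first-order condition from scratch: you test the inequality \eqref{eq:local_sol} along $y=x^{*}+td$ for a feasible direction $d$, exploit the exact quadratic expansion of $g$ and the $o(t)$ expansion of $h$, divide by $t$ and pass to the limit, arriving directly at the directional form \eqref{eq:first_order} without ever touching normal cones. What the paper's route buys is brevity (the optimality condition is a citation) at the cost of the normal-cone bookkeeping for the intersection $C\cap\mathbb{\overline{B}}(x^{*},\bar{r})$; what your route buys is self-containedness and the avoidance of that identity altogether, since you land on \eqref{eq:first_order}, which the paper has already declared equivalent to \eqref{eq:stationary_point}. The only small quibble is your closing remark: for the expansion $h(x^{*}+td)-h(x^{*}) = t\nabla h(x^{*})^Td + o(t)$ mere differentiability of $h$ at $x^{*}$ already suffices, so continuous differentiability is not actually the delicate point; this does not affect the validity of the argument.
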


\begin{proof}
Suppose that $x^{*}$ is a local equilibria of \eqref{eq:ncMVIP}. Then there exists a neighborhood $\mathbb{B}(x^{*},r)$ of $x^{*}$ such that $F(x^{*})^T(y-x^{*}) + \varphi(y) -
\varphi(x^{*}) \geq 0$ for all $y\in\mathbb{B}(x^{*}, r)\cap C$. According to Proposition \ref{pro:gap_function}, this requirement is equivalent to $x^{*}$ is a local solution of 
\begin{equation}\label{eq:opt_prob}
\min\left\{ (\tilde{B}x^{*}-\tilde{\alpha})^T(y-x^{*}) + \frac{1}{2}y^TBy - h(y) ~|~y\in \mathbb{B}(x^{*}, \bar{r})\cap C\right\},
\end{equation}
for $0 < \bar{r} \leq r$.
Since $h$ is continuous differentiable, the function inside the brackets is also continuous differentiable. Applying the first order necessary optimality condition for the smooth optimization
problem \eqref{eq:opt_prob} (see, e.g., \cite{Nesterov2004}), we obtain:
\begin{equation*}
0 \in (\tilde{B} + B)x^{*} - \tilde{\alpha} - \nabla{h}(x^{*}) + N_{C\cap\mathbb{\overline{B}}(x^{*},\bar{r})}(x^{*}).  
\end{equation*}
However, $N_{C\cap\mathbb{\overline{B}}(x^{*},\bar{r})}(x^{*}) = N_C(x^{*}) \cap N_{\mathbb{\overline{B}}(x^{*},\bar{r})}(x^{*}) = N_C(x^{*})$.

 \eqref{eq:stationary_point}.
\end{proof}

Let $\partial \delta_C(x)$ denote the subdifferential of the indicator function $\delta_C$ of $C$ at $x$. One has $\partial\delta_C(x) = N_C(x)$. 
Since matrix $B$ is symmetric and positive definite, if we define $g_1(x) := \frac{1}{2}x^TBx + \delta_C(x)$ then $\partial g_1(x) = Bx + \partial\delta_C(x)$ and this mapping is maximal
monotone. Consequently, $T^{-1}_c := (I + c\partial g_1)^{-1}$ is well-defined and single valued, where $I$ is the identity mapping (see \cite{Rockafellar1976,Rockafellar1997}).

The following proposition provides a necessary and sufficient condition for a stationary point of \eqref{eq:ncMVIP}.

\begin{proposition}\label{pro:proximal_map}  
A necessary and  sufficient condition for a point $x\in C$ to be a stationary point to problem \eqref{eq:ncMVIP} is:
\begin{equation}\label{eq:proximal_map}
x =  \left( I + c\partial g_1\right)^{-1} \left( x -  c(\tilde B x - \tilde{\alpha}) + c\nabla h(x) \right),
\end{equation} 
where $c > 0$ and $I$ stands for the identity mapping.
\end{proposition}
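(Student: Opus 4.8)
The plan is to unfold the definition of the resolvent $(I+c\partial g_1)^{-1}$ and thereby reduce the fixed-point equation \eqref{eq:proximal_map} to the stationarity inclusion \eqref{eq:stationary_point} by elementary, fully reversible algebra. Before starting, I would recall from the paragraph immediately preceding the statement that $\partial g_1(x)=Bx+\partial\delta_C(x)=Bx+N_C(x)$ and that $\partial g_1$ is maximal monotone, so that $T_c^{-1}:=(I+c\partial g_1)^{-1}$ is well-defined and single-valued. This is precisely what makes \eqref{eq:proximal_map} a genuine equation rather than a set inclusion, and it is the only structural fact the argument really leans on.

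The central step is the standard characterization of the resolvent: for the maximal monotone operator $\partial g_1$ one has, for any $c>0$, that $u=(I+c\partial g_1)^{-1}(v)$ holds if and only if $v\in u+c\,\partial g_1(u)$. Applying this with $u=x$ and $v=x-c(\tilde{B}x-\tilde{\alpha})+c\nabla h(x)$ converts \eqref{eq:proximal_map} into the inclusion
\begin{equation*}
x-c(\tilde{B}x-\tilde{\alpha})+c\nabla h(x)\in x+c\bigl(Bx+N_C(x)\bigr).
\end{equation*}
Cancelling $x$ from both sides and dividing by $c>0$ (the freedom in $c$ here mirrors the cone property of $N_C$ already recorded in \eqref{eq:tmp1}) gives $-(\tilde{B}x-\tilde{\alpha})+\nabla h(x)\in Bx+N_C(x)$. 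Collecting terms then yields $0\in(\tilde{B}+B)x-\tilde{\alpha}-\nabla h(x)+N_C(x)$, which is exactly \eqref{eq:stationary_point} with $Q:=\tilde{B}+B$. Because each manipulation is an equivalence, reading the chain backwards establishes necessity and sufficiency in one stroke, so no separate converse argument is needed.

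I do not expect a genuine obstacle: the proof is in essence a transcription of the resolvent identity together with the subdifferential sum rule $\partial\bigl(\tfrac12 x^TBx+\delta_C\bigr)=Bx+N_C$, both of which are available. The single point deserving a word of care is the well-definedness and single-valuedness of $(I+c\partial g_1)^{-1}$, which rests on the maximal monotonicity of $\partial g_1$; this is guaranteed since $g_1$ is a proper, lower semicontinuous, convex function, being the sum of the continuous convex quadratic $\tfrac12 x^TBx$ (with $B$ symmetric positive definite) and the indicator $\delta_C$ of the closed convex set $C$. With that in hand, the equivalence \eqref{eq:proximal_map} $\Leftrightarrow$ \eqref{eq:stationary_point} is immediate.
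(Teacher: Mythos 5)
Your proof is correct and follows essentially the same route as the paper's: unfold the resolvent $(I+c\partial g_1)^{-1}$ via the equivalence $u=(I+c\partial g_1)^{-1}(v)\Leftrightarrow v\in u+c\,\partial g_1(u)$, substitute $\partial g_1(x)=Bx+N_C(x)$, and cancel to recover the stationarity inclusion \eqref{eq:stationary_point}, with every step reversible. Your writeup is in fact slightly more careful than the paper's in spelling out the algebraic cancellation and the role of $c>0$.
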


\begin{proof}
Since $g_1$ is proper  closed  convex, the inverse $\big(I  + \partial g_1\big )^{-1}$  is single valued  and defined everywhere \cite{Rockafellar1976}.
Thus $x$ satisfies \eqref{eq:proximal_map} if and only if $ x - c (\tilde B x - \tilde{\alpha}) + c \nabla h(x)  \in (I + c\partial g_1) (x)$.  Moreover, since $N_C(x)$ is a cone and $\partial g_1(x) = Bx + \partial \delta_C(x) = Bx +  N_C(x)$, the latter inclusion is equivalent to $0\in   \tilde B x - \tilde{\alpha} + Bx - \nabla h(x)   + N_C(x)$,
which shows that $x$ is a stationary point of \eqref{eq:ncMVIP}.
\end{proof}

Now, if we define $y_c(x):= x - c(\tilde B x - \alpha) + c\nabla h(x) $  and 
\begin{equation}\label{eq:proximal_point}
S_c(x) := \big( I + c\partial g_1 \big )^{-1} \big ( x -  c(\tilde B x - \alpha) + c\nabla h(x) \big ),
\end{equation} 
then, it follows from Proposition \ref{pro:proximal_map} that $x = S_c(x)$. 
Therefore, every stationary point $x$ of \eqref{eq:ncMVIP} is a fixed-point of $S_c(\cdot)$.
To compute $S_c(x)$, it requires to solve the following strongly convex quadratic problem over a convex set:
\begin{equation}\label{eq:convex_prob}
\min\left\{ \frac{1}{2}y^TBy + \frac{1}{2c}\norm{y-y_c(x)}^2 ~|~ x \in C \right\},
\end{equation}
This problem has a unique solution for any $c>0$.

Finally, we introduce the following concept, which will be used in the sequel.
For a given tolerance $\varepsilon \geq 0$, a point $x^{*}\in C$ is said to be an $\varepsilon$-stationary point to \eqref{eq:ncMVIP} if    
\begin{equation}\label{eq:e_first_order}
D\phi(x^{*}; d) \geq -\varepsilon, ~ \forall d\in\mathcal{F}_C(x^{*}), ~\norm{d}=1.
\end{equation}

\section{Gradient mapping and its properties}\label{sec:GDmap} 
By substituting $y_c(x)$ into \eqref{eq:convex_prob}, after a simple rearrangement, we can write problem \eqref{eq:convex_prob} as
\begin{equation}\label{eq:tmp4}
\min\left\{ \frac{1}{2}y^TBy + [\tilde B x - \tilde{\alpha}  - \nabla h(x) ]^T(y-x) + \frac{1}{2c}\norm{y-x}^2 ~|~ y\in C \right\}.  
\end{equation}
Now, we consider the following mappings:
\begin{eqnarray}
&& m_c(x; y) \!:= \!\frac{1}{2}y^TBy \!+\! [\tilde B x \!-\! \tilde{\alpha}  \!-\! \nabla h(x)]^T\!(y-x) \!-\! h(x) \!+\! \frac{1}{2c}\norm{y-x}^2,\label{eq:m_function}\\
&&\textrm{and} ~ s_c(x) := \textrm{arg}\!\min \left\{ m_c(x;y) ~|~ y\in C \right\}. \label{eq:s_function} 
\end{eqnarray}
Then, since problem \eqref{eq:s_function} is strongly convex, $s_c(x)$ is well-defined and single-valued.
Let us define
\begin{equation}\label{eq:gradient_map}
G_c(x) := \frac{1}{c}[x - s_c(x)].
\end{equation}
The mapping $G_c(\cdot)$ is referred as a {\it gradient-type mapping} of \eqref{eq:m_U_map} \cite{Nesterov1996}. 
Appyling the optimality condition for \eqref{eq:s_function}  we have 
\begin{equation}\label{eq:opt_condition}
\left[Bs_c(x) + \tilde Bx - \tilde{\alpha} - \nabla h(x) - G_c(x)\right]^T( y - s_c(x)) \geq 0, ~~\forall y\in C.
\end{equation}

From now on, we further suppose that the cost function $h$ is Lipschitz continuous differentiable on $C$ with a Lipschitz constant $L_h > 0$, i.e.
\begin{align}
\norm{\nabla h(x)-\nabla h(y)} \leq L_h\norm{x-y}, ~\forall x, y\in C.  \label{eq:h_lipschitz}
\end{align}
By using the mean-valued theorem, it is easy to show that the condition \eqref{eq:h_lipschitz} implies
\begin{equation}\label{eq:extended_ineq}
\left| h(y) - h(x) - \nabla h(x)^T(y-x) \right| \leq \frac{1}{2}L_h\norm{y-x}^2, ~~\forall x, y\in C.
\end{equation}
The following lemma shows some properties of $D\phi(\cdot;\cdot)$.

\begin{lemma}\label{le:phi_properties}
For any $x\in C$, we have
\begin{align}
&D\phi(s_c(x); x-s_c(x)) \geq \frac{1-c(L_h+\norm{\tilde B})}{c^2}\norm{G_c(x)}^2, \label{eq:tmp_eq2} \\
&D\phi(s_c(x); y \!- \! s_c(x)) \!\geq \! -[1\! + \! c(L_h \!+ \! \norm{\tilde B})]\norm{G_c(x)}\norm{y \! - \! s_c(x)},~ \forall y\! \in\! C. \label{eq:tmp_eq3} 
\end{align}
As a consequence, for any $d\in\mathcal{F}_C(s_c(x))$ with $\norm{d}=1$, we have
\begin{equation}\label{eq:tmp_eq4}
D\phi(s_c(x); d) \geq -[1+c(L_h+\norm{\tilde B})]\norm{G_c(x)}. 
\end{equation}
\end{lemma}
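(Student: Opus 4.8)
The plan is to derive all three inequalities from a single algebraic identity obtained by comparing the directional-derivative functional $D\phi(s_c(x);\cdot)$ of \eqref{eq:dir_derivative} against the first-order optimality condition \eqref{eq:opt_condition} satisfied by $s_c(x)$. Abbreviating $s:=s_c(x)$ and $G:=G_c(x)=\tfrac{1}{c}(x-s)$, I would start from
\[
D\phi(s; y-s) = [(\tilde B+B)s - \tilde{\alpha} - \nabla h(s)]^T(y-s),
\]
and subtract the optimality inequality \eqref{eq:opt_condition}, which guarantees $[Bs+\tilde Bx-\tilde{\alpha}-\nabla h(x)-G]^T(y-s)\ge 0$ for every $y\in C$. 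The $Bs$ and $\tilde{\alpha}$ contributions cancel, leaving the master lower bound
\[
D\phi(s; y-s) \ge \big[\tilde B(s-x) + (\nabla h(x)-\nabla h(s)) + G\big]^T(y-s), \qquad \forall y\in C.
\]
Everything then reduces to estimating this bracketed residual, using three ingredients: symmetry of $\tilde B$ with operator bound $\norm{\tilde B}$, the $L_h$-Lipschitz property \eqref{eq:h_lipschitz} of $\nabla h$, and the identity $\norm{s-x}=c\norm{G}$ coming from \eqref{eq:gradient_map}.

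For \eqref{eq:tmp_eq3} I would apply Cauchy--Schwarz followed by the triangle inequality to the residual. The three summands are bounded by $\norm{\tilde B(s-x)}\le \norm{\tilde B}\,c\norm{G}$, $\norm{\nabla h(x)-\nabla h(s)}\le L_h c\norm{G}$, and $\norm{G}$, so their total contributes exactly the factor $[1+c(L_h+\norm{\tilde B})]\norm{G}$ multiplying $\norm{y-s}$, which is \eqref{eq:tmp_eq3}. The inequality \eqref{eq:tmp_eq4} then follows by homogeneity with essentially no extra work: any $d\in\mathcal F_C(s)$ has the form $d=t(y-s)$ with $t\ge 0$ and $y\in C$, the functional $D\phi(s;\cdot)$ is linear in its second argument, and normalizing to $\norm{d}=1$ absorbs the factor $t\norm{y-s}=\norm{d}$.

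The inequality \eqref{eq:tmp_eq2} is the delicate one, and where I would concentrate. Here I would specialize the master inequality to $y=x$, which is admissible since $x\in C$, so that $y-s=x-s=cG$ and the residual is paired with $cG$. The favorable term is $G^T(cG)=c\norm{G}^2$; the two cross terms are controlled from below by $-c^2 G^T\tilde BG\ge -c^2\norm{\tilde B}\norm{G}^2$ and, via Lipschitzness, $c(\nabla h(x)-\nabla h(s))^TG\ge -c^2 L_h\norm{G}^2$. Collecting the three yields a clean bound of the shape $c\,[1-c(L_h+\norm{\tilde B})]\norm{G}^2$. The main obstacle is purely the bookkeeping of the powers of $c$ generated by repeatedly substituting $x-s=cG$ into a quadratic expression: one must fix consistently whether the bound is phrased in $\norm{G_c(x)}$ or in $\norm{x-s_c(x)}$, since the final constant is sensitive to that choice. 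I would therefore reconcile the exponent of $c$ in my derived coefficient against the stated $\tfrac{1-c(L_h+\norm{\tilde B})}{c^2}$ before finalizing, to ensure the normalization matches the definition \eqref{eq:gradient_map}.
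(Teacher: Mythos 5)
Your proposal is correct and follows essentially the same route as the paper: both rest on the optimality condition \eqref{eq:opt_condition} for $s_c(x)$, an add-and-subtract decomposition of $D\phi(s_c(x);\cdot)$, and the bounds $\norm{\tilde B(x-s_c(x))}\leq \norm{\tilde B}\,c\norm{G_c(x)}$ and \eqref{eq:h_lipschitz}; your single ``master inequality'' is just the paper's two separate computations done once. Your flagged worry about the power of $c$ in \eqref{eq:tmp_eq2} is well founded: the argument actually yields $D\phi(s_c(x);x-s_c(x))\geq \frac{1-c(L_h+\norm{\tilde B})}{c}\norm{x-s_c(x)}^2 = c\,[1-c(L_h+\norm{\tilde B})]\,\norm{G_c(x)}^2$, i.e.\ exactly your coefficient, and the stated factor $\frac{1-c(L_h+\norm{\tilde B})}{c^2}$ arises from a substitution slip in the paper's own final line when converting $\norm{x-s_c(x)}=c\norm{G_c(x)}$.
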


\begin{proof}
From the definition of $D\phi$ in \eqref{eq:dir_derivative}, we have
\begin{eqnarray}\label{eq:le1_tmp1}
&&D\phi(s_c(x); x-s_c(x)) \!=\! \left[\tilde Bs_c(x) \! -\!  \tilde{\alpha} \! + \! Bs_c(x) \! - \! \nabla h(s_c(x)) \right]^T\left(x \! - \! s_c(x)\right) \nonumber\\
&& = \left[\tilde Bx - \tilde{\alpha} - \nabla h(x) + Bs_c(x)\right]^T\left(x-s_c(x)\right) \nonumber \\
[-1.5ex]\\[-1.5ex]
&& - \left[\tilde Bs_c(x) - \tilde{\alpha} - \nabla h(s_c(x)) - \tilde Bx - \tilde{\alpha} + \nabla h(x)\right]^T\left(s_c(x)-x\right) \nonumber \\
&& \geq \left[\tilde Bx - \tilde{\alpha} - \nabla h(x) + Bs_c(x) \right]^T\left(x-s_c(x)\right) - (L_h+\norm{\tilde B})\norm{x-s_c(x)}^2.\nonumber
\end{eqnarray}
Substituting \eqref{eq:opt_condition} into \eqref{eq:le1_tmp1} we obtain
\begin{align*}
D\phi(s_c(x); x - s_c(x)) &\geq  (\frac{1}{c} - [L_h + \norm{\tilde B})]\norm{x-s_c(x)}^2 \nonumber\\
& =  \frac{1 - c(L_h + \norm{\tilde B})}{c^2}\norm{G_c(x)}^2, \nonumber  
\end{align*}
which proves \eqref{eq:tmp_eq2}.

Using again \eqref{eq:opt_condition} and \eqref{eq:h_lipschitz} we have
\begin{align*}
&D\phi(s_c(x); y - s_c(x))  =  \left[\tilde Bs_c(x) - \tilde{\alpha} + Bs_c(x) - \nabla h(s_c(x)\right]^T(y - s_c(x)) \nonumber\\
& = \left[\tilde B s_c(x) - \tilde{\alpha} - \nabla h(s_c(x))\right]^T(y-s_c(x)) + (Bs_c(x))^T(y-x) \nonumber\\
&\geq \left[\tilde Bs_c(x) - \tilde{\alpha} - \nabla h(s_c(x))\right]^T(y-s_c(x)) \nonumber \\
& + \left[\tilde Bx - \nabla h(x) + \frac{1}{c}(s_c(x)-x)\right]^T(s_c(x)-y) \nonumber\\
& = \left[\tilde Bs_c(x) - \nabla h(s_c(x)) - (\tilde Bx -\tilde{\alpha}) + \nabla h(x) \right]^T(y-s_c(x)) + G_c(x)^T(s_c(x)-y) \nonumber\\
&\geq - (L_h+\norm{\tilde B})\norm{x-s_c(x)}\norm{y-s_c(x)} - \norm{G_c(x)}\norm{y-s_c(x)} \nonumber\\
&\geq - \left[1+c(L_h+\norm{\tilde B})\right]\norm{G_c(x)}\norm{y-s_c(x)},\nonumber
\end{align*}
which proves \eqref{eq:tmp_eq3}. 

By the convexity of $C$, there exists $t\geq 0$ such that $s_c(x) + t d \in C$, where $\norm{d}=1$. If we substitute $y := s_c(x)+t d \in C$ into \eqref{eq:tmp_eq3} then we get 
\begin{equation}\label{eq:le1_tmp4}
D\phi(x;td) \geq - t(1+cL_fh+c\norm{\tilde B})\norm{G_c(x)}.
\end{equation}
If $t=0$ then \eqref{eq:tmp_eq3} automatically  holds. If $t > 0$ then by the linearity of $D$ with respect to the second argument, we divide both sides of \eqref{eq:le1_tmp4} by $t>0$ to get
\eqref{eq:tmp_eq3}.
\end{proof}
 
\begin{remark}\label{re:G_properties}
For a fixed $x\in C$, if we define $e_c(x) := \norm{G_c(x)}$ and $r_c(x) := \norm{x-s_c(x)}$ then $e_c(x)$ decreases in $c$ and $r_c(x)$ increases in $c$.
\end{remark}
Indeed, let $q(y,c):= (\tilde Bx-\tilde{\alpha})^T(y-x) + \frac{1}{2}y^TBy - h(x) - \nabla h(x)^T(y-x) + \frac{1}{2c}\norm{y-x}^2$. Then $q$ is convex jointly  in two arguments $y$ and $c$. Thus
$\omega(c):=\min_{y\in C}q(y,c)$ is convex. It is easy to see that $\omega'(c) = -\frac{1}{2}\norm{G_c(x)}^2$ increases in $c$.  Hence, $e_c(x)$ decreases in $c$. If we replace $c$ by $1/c$ in
$q(y,c)$, this function becomes concave in $c$, then by the same argument as $\omega(\cdot)$, we conclude that $r_c(x)$ increases in $c$.

Since $B$ and $\tilde{B}$ are symmetric, we consider a potential function defined as follows:
\begin{equation}\label{eq:phi_function}
\gamma(x) :=  \frac{1}{2}x^TBx + \frac{1}{2}x^T\tilde B x - \tilde{\alpha}^Tx - h(x),
\end{equation}
Then, $\gamma$ is nonconvex but Lipschitz continuous differentiable.
We have the following statement.

\begin{lemma}\label{le:G_properties_2}
For $x, y\in C$, we have
\begin{align}
&m_c(x; s_c(x)) + x^T\tilde Bx - \alpha^Tx \leq \gamma(x)  - \frac{c}{2}\norm{G_c(x)}^2. \label{eq:est_01}
\end{align}
Moreover, if $c(L_h + \norm{\tilde{B}}) \leq 1$ then
\begin{align}\label{eq:descent}
m_c(x; s_c(x)) + \frac{1}{2}x^T\tilde Bx - \tilde{\alpha}^Tx \geq \gamma(s_c(x)).
\end{align}
\end{lemma}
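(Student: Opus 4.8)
The plan is to treat the two inequalities separately: the first, \eqref{eq:est_01}, follows from strong convexity of $m_c(x;\cdot)$, while the second, \eqref{eq:descent}, comes from a direct algebraic expansion combined with the descent inequality \eqref{eq:extended_ineq}. The common thread is to compare $m_c(x;s_c(x))$ against the value of the potential $\gamma$ at either $x$ or $s_c(x)$ and to control the discrepancy by $\norm{G_c(x)}^2 = c^{-2}\norm{x-s_c(x)}^2$.

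For \eqref{eq:est_01} I would first evaluate $m_c(x;\cdot)$ at $y=x$. In \eqref{eq:m_function} the linear and proximal terms vanish, leaving $m_c(x;x)=\frac{1}{2}x^TBx-h(x)$, so comparing with \eqref{eq:phi_function} yields the identity $m_c(x;x)+\frac{1}{2}x^T\tilde Bx-\tilde{\alpha}^Tx=\gamma(x)$. Next I would use that $y\mapsto m_c(x;y)$ is strongly convex: writing it as the convex quadratic $\frac{1}{2}y^TBy$ (with $B\succeq 0$) plus an affine term plus $\frac{1}{2c}\norm{y-x}^2$, its modulus of strong convexity is at least $1/c$. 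Since $s_c(x)$ is the constrained minimizer over $C$ and $x\in C$, the standard estimate at a strongly convex minimizer gives $m_c(x;x)\geq m_c(x;s_c(x))+\frac{1}{2c}\norm{x-s_c(x)}^2$. Because $\norm{x-s_c(x)}^2=c^2\norm{G_c(x)}^2$ by \eqref{eq:gradient_map}, the regularization term equals $\frac{c}{2}\norm{G_c(x)}^2$, and substituting the identity above delivers \eqref{eq:est_01}.

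For \eqref{eq:descent}, abbreviate $s:=s_c(x)$ and form the difference $m_c(x;s)+\frac{1}{2}x^T\tilde Bx-\tilde{\alpha}^Tx-\gamma(s)$. Substituting \eqref{eq:m_function} and \eqref{eq:phi_function}, the quadratic terms $\frac{1}{2}s^TBs$ cancel, and the terms carrying $\tilde{\alpha}$ cancel as well. Using the symmetry of $\tilde B$, the surviving $\tilde B$-terms $(\tilde Bx)^T(s-x)-\frac{1}{2}s^T\tilde Bs+\frac{1}{2}x^T\tilde Bx$ collapse into the single quadratic form $-\frac{1}{2}(x-s)^T\tilde B(x-s)$, while the $h$-terms regroup as $h(s)-h(x)-\nabla h(x)^T(s-x)$. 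Thus the difference reduces exactly to
\[
-\tfrac{1}{2}(x-s)^T\tilde B(x-s)+\big[h(s)-h(x)-\nabla h(x)^T(s-x)\big]+\tfrac{1}{2c}\norm{x-s}^2.
\]
I would then bound the first term below by $-\frac{1}{2}\norm{\tilde B}\,\norm{x-s}^2$ (spectral bound for symmetric $\tilde B$) and the bracketed term below by $-\frac{1}{2}L_h\norm{x-s}^2$ via \eqref{eq:extended_ineq}, so that the whole difference is at least $\frac{1}{2c}\big[1-c(L_h+\norm{\tilde B})\big]\norm{x-s}^2$, which is nonnegative precisely under the hypothesis $c(L_h+\norm{\tilde B})\leq 1$; this proves \eqref{eq:descent}.

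The only delicate point is the bookkeeping in this last expansion, namely verifying the exact cancellation of the $\tilde{\alpha}$- and $B$-terms and the collapse of the $\tilde B$-terms into $-\frac{1}{2}(x-s)^T\tilde B(x-s)$. Once that rearrangement is in place, the two elementary estimates together with the stated smallness condition on $c$ finish the argument, and the first inequality is by comparison routine once the identity at $y=x$ and the strong-convexity modulus $1/c$ are identified.
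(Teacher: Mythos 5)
Your proof is correct and follows essentially the same route as the paper's: the identity $m_c(x;x)+\tfrac{1}{2}x^T\tilde Bx-\tilde{\alpha}^Tx=\gamma(x)$ combined with the $\tfrac{1}{2c}$-growth of the strongly convex function $m_c(x;\cdot)$ at its constrained minimizer for \eqref{eq:est_01}, and the same algebraic expansion bounded via the spectral estimate for $\tilde B$ and \eqref{eq:extended_ineq} for \eqref{eq:descent}. Note that, like the paper's own proof, you actually establish \eqref{eq:est_01} with $\tfrac{1}{2}x^T\tilde Bx-\tilde{\alpha}^Tx$ on the left-hand side, which indicates the displayed form of \eqref{eq:est_01} in the lemma contains a typo.
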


\begin{proof}
It is obvious from the definition of $m_c(x; x)$ that $\phi(x) = m_c(x; x) + \frac{1}{2}x^T\tilde Bx - \tilde{\alpha}^Tx$. Since $m_c(x;\cdot)$ is strongly convex quadratic with modulus $\frac{1}{2c}$, using \eqref{eq:phi_function} we have
\begin{align*}
\gamma(x) - m_c(x;s_c(x))-\frac{1}{2}x^T\tilde Bx  + \tilde{\alpha}^Tx &= m_c(x;x) - m_c(x;s_c(x)) \notag\\
& \geq \frac{1}{2c}\norm{x-s_c(x)}^2  = \frac{c}{2}\norm{G_c(x)}^2,  
\end{align*}
which proves \eqref{eq:est_01}.

To prove \eqref{eq:descent}, from \eqref{eq:extended_ineq} and the definition of $\gamma$ we have 
\begin{align*}
\gamma(s_c(x)) - m_c(x; s_c(x)) - \frac{1}{2}x^T\tilde Bx + \tilde{\alpha}^Tx & = \frac{1}{2}\left[s_c(x)\tilde Bs_c(x) - x^T\tilde Bx - 2(\tilde Bx)^T(s_c(x)-x)\right] \\
&- h(s_c(x)) + h(x) + \nabla h(x)^T(s_c(x) - x) - \frac{1}{2c}\norm{s_c(x)-x}^2\\   
&\leq \frac{1}{2}(s_c(x)-x)^T\tilde B(s_c(x)-x) + \frac{(cL_h-1)}{2c}\norm{s_c(x)-x}^2\\
&\leq -\frac{1-c(L_h+\norm{\tilde B})}{2c}\norm{s_c(x)-x}^2.
\end{align*}
By assumption $c(L_h + \norm{\tilde{B}}) \leq 1$, we obtain \eqref{eq:descent}.
\end{proof}

If we combine the inequalities \eqref{eq:descent} and \eqref{eq:est_01} in Lemma \ref{le:G_properties_2} then:
\begin{equation}\label{eq:monotone}
\gamma(s_c(x)) \leq \gamma(x) - \frac{c}{2}\norm{G_c(x)} ^2.
\end{equation}
This inequality plays an important role in proving  the convergence of the splitting proximal point algorithm in the section.

For a given starting point $x^0\in C$, let us define the level set of $\gamma$ with respect to $C$ as
\begin{equation}\label{eq:level_set}
\mathcal{L}_{\gamma}(\gamma(x^0))  := \left\{ x \in C ~|~ \gamma(x) \leq \gamma(x^0) \right\}. 
\end{equation}
From \eqref{eq:monotone}, it is obvious that if $x^0\in \mathcal{L}_{\gamma}(\gamma(x^0))$ then $s_c(x^0) \in \mathcal{L}_{\gamma}(\gamma(x^0))$ provided that $c(L_h+\norm{\tilde B}) \leq 1$.

\section{A splitting proximal point algorithm and its convergence}\label{sec:LocalMethod}
Proposition \ref{pro:proximal_map} suggests that a proximal point method can be applied to find a stationary point of \eqref{eq:ncMVIP}.
For the implementation purpose, the proximal mapping defined by \eqref{eq:proximal_point} is extracted to the expression \eqref{eq:convex_prob}.
The {\it splitting proximal point algorithm} constructs an iterative sequence as follows:

\noindent\rule[1pt]{\textwidth}{1.0pt}{~~}
\begin{algorithm}
\vskip -0.2cm\label{alg:A1}{~}(The splitting proximal algorithm)
\end{algorithm}
\vskip -0.3cm
\noindent\rule[1pt]{\textwidth}{0.5pt}
\noindent{\bf Initialization:} Choose a positive number $c_0 > 0$. Find an initial point $x^0\in C$ and set $k:=0$.\\
\noindent{\bf Iteration $k$:} For a given $x^k$, execute the three steps below. 
\begin{itemize}
\item[]\textit{Step 1}: Evaluate $\nabla h(x^k)$ and set $y_k := x^k - c_k(\tilde Bx^k-\alpha)  + c_k \nabla h(x^k)$.
\item[]\textit{Step 2}: Compute $x^{k+1}$ by solving the following convex quadratic program over a convex set:
\begin{equation}\label{eq:proximal_method}
\min\left\{ \frac{1}{2}x^TBx + \frac{1}{2c_k}\norm{y-y^k}^2 ~|~ y\in C\right\}.
\end{equation}
\item[]\textit{Step 3}:  If $\norm{x^{k+1}-x^k}\leq\varepsilon$ for a given tolerance $\varepsilon>0$ then terminate, $x^k$ is an $\varepsilon$-stationary point of \eqref{eq:ncMVIP}. 
Otherwise, update $c_k$ and increase $k$ by $1$ and go back to Step 1.
\end{itemize}
\vskip-0.3cm
\noindent\rule[1pt]{\textwidth}{1.0pt}
In Algorithm \ref{alg:A1} we left unspecified the way to update $c_k$. 
If the Lipschitz constant $L_h$ is provided then we can choose $c_k = \frac{1}{L_{\gamma}}$ for all $k$, where $L_{\gamma}
:= L_h+\norm{\tilde{B}}$. Otherwise, a line-search procedure can be used to update $c_k$. 
The latter procedure is briefly described as follows.
First, we choose two constants $\underline{c}$ and $\bar{c}$ such that $\underline{c} > 0$ and
$\frac{1}{L_{\gamma}} \leq \bar{c} < +\infty$. Then we perform the following steps.
\begin{itemize}
\item Given a constant $\tau_c \in (0, 1)$. Choose an initial value of $c$ in $[\underline{c}, \bar{c}]$.
\item Compute $s_c(x^k)$. While the decreasing condition
\begin{equation}\label{eq:Armijo_cond}
\gamma(s_c(x^k)) \leq m_c(x^k; s_c(x^k)) + \frac{1}{2}(x^k)^T\tilde{B}x^k - \tilde{\alpha}^Tx^k.
\end{equation}
does not satisfy, increase $c$ by $c := \tau_cc$ and recompute $s_c(x^k)$. 
\item Set $c_{k+1} := c$.
\end{itemize}
Now, we define $\Delta x^k := x^{k+1}-x^k$ and
\begin{equation}\label{eq:Delta}
\delta_k := \min_{0\leq i\leq k}\frac{\norm{\Delta_i}^2}{2c_i}.
\end{equation}
The convergence of the {\it splitting proximal point algorithm} is stated as follows.

\begin{theorem}\label{th:convergence}
Suppose that the function $h$ is Lipschitz continuous differentiable on $C$ with a Lipschitz constant $L_h \geq 0$. Suppose further that for a given $x^0\in C$ the level set
$\mathcal{L}_{\gamma}(\gamma(x^0))$ is bounded (particularly, $C$ is bounded). Then the sequence $\{x^k\}_{k\geq 0}$ generated by Algorithm \ref{alg:A1} starting from $x^0$ satisfies:
\begin{equation}\label{eq:bound_est}
\delta_k \leq \frac{(\gamma(x^0) - \underline{\gamma})}{k+1}, ~ \forall k\geq 0,
\end{equation}
where $\underline{\gamma} := \displaystyle\inf_{x\in\mathcal{L}_{\gamma}(\gamma(x^0))}\gamma(x)$.
Moreover, for any $d \in\mathcal{F}_C(x^{i_k})$ with $\norm{d}=1$, we have
\begin{equation}\label{eq:descent_est}
D\phi(x^{i_k}; d ) \geq - (1+\underline{c}L_h+\underline{c}\norm{\tilde B})\sqrt{\frac{2(\gamma(x^0) - \underline{\gamma})}{k+1}},
\end{equation}
where $i_k$ is the index such that $c_{i_k}\norm{G_{c_{i_k}}(x^{i_k})}^2 = \Delta_k$.

As a consequence,  if the sequence $\{x^k\}$ generated by \eqref{eq:proximal_method} is bounded,  then every limit point of this sequence is a stationary point of \eqref{eq:ncMVIP}. 
The set of limit points is connected and if it is finite then the whole sequence $\{x^k\}$ converges to a stationary point of \eqref{eq:ncMVIP}.
\end{theorem}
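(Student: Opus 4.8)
The plan is to base the entire argument on the one-step descent inequality \eqref{eq:monotone}. First I would observe that each accepted step satisfies \eqref{eq:monotone}: when $c_k=1/L_\gamma$ this is immediate, and in the line-search variant the acceptance test \eqref{eq:Armijo_cond} is literally \eqref{eq:descent}, which together with the always-valid \eqref{eq:est_01} yields \eqref{eq:monotone}. Since $x^{k+1}=s_{c_k}(x^k)$ and $G_{c_k}(x^k)=\frac{1}{c_k}(x^k-x^{k+1})=-\frac{1}{c_k}\Delta_k$, \eqref{eq:monotone} reads $\gamma(x^{k+1})\le\gamma(x^k)-\frac{\norm{\Delta_k}^2}{2c_k}$. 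In particular $\gamma$ is nonincreasing along the iterates, so $\{x^k\}\subset\mathcal L_\gamma(\gamma(x^0))$ and $\gamma$ is bounded below there by $\underline\gamma$. Telescoping from $i=0$ to $k$ then gives $\sum_{i=0}^k\frac{\norm{\Delta_i}^2}{2c_i}\le\gamma(x^0)-\gamma(x^{k+1})\le\gamma(x^0)-\underline\gamma$, and since $\delta_k$ is the smallest of these $k+1$ nonnegative terms, $(k+1)\delta_k\le\gamma(x^0)-\underline\gamma$, which is \eqref{eq:bound_est}.

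For the stationarity estimate \eqref{eq:descent_est} I would apply \eqref{eq:tmp_eq4} of Lemma \ref{le:phi_properties} at the best iterate $x^{i_k}$, obtaining $D\phi(\cdot;d)\ge-[1+c_{i_k}(L_h+\norm{\tilde B})]\norm{G_{c_{i_k}}(x^{i_k})}$ for every unit feasible direction $d$. By the choice of $i_k$ one has $c_{i_k}\norm{G_{c_{i_k}}(x^{i_k})}^2=2\delta_k$, hence $\norm{G_{c_{i_k}}(x^{i_k})}=\sqrt{2\delta_k/c_{i_k}}$; inserting the bound $\delta_k\le(\gamma(x^0)-\underline\gamma)/(k+1)$ and the uniform range $\underline c\le c_{i_k}\le\bar c$ of the proximal parameters (the positive lower bound follows because the line search only shrinks $c$ geometrically from an initial value in $[\underline c,\bar c]$ and stops no later than the first time $c(L_h+\norm{\tilde B})\le1$) produces the $O(1/\sqrt{k+1})$ right-hand side. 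The delicate part, and where I expect most of the effort to lie, is the constant bookkeeping: one must absorb the factor $[1+c_{i_k}(L_h+\norm{\tilde B})]/\sqrt{c_{i_k}}$ into the stated constant $1+\underline cL_h+\underline c\norm{\tilde B}$ using only the admissible range of $c_{i_k}$, and reconcile the index shift between the point $s_{c_{i_k}}(x^{i_k})$ appearing in \eqref{eq:tmp_eq4} and the iterate named in \eqref{eq:descent_est}.

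For the asymptotic conclusion I would first extract $\norm{\Delta_i}\to0$ from the summability above: since $c_i\le\bar c$, $\sum_i\norm{\Delta_i}^2\le2\bar c(\gamma(x^0)-\underline\gamma)<\infty$, so $\norm{x^{i+1}-x^i}\to0$, and the sequence is bounded because it lies in the bounded set $\mathcal L_\gamma(\gamma(x^0))$. Given any limit point $\bar x=\lim_j x^{k_j}$, the vanishing increments force $x^{k_j+1}\to\bar x$ and, since $c_{k_j}\ge\underline c>0$, $G_{c_{k_j}}(x^{k_j})=\frac{1}{c_{k_j}}(x^{k_j}-x^{k_j+1})\to0$. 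Passing to a further subsequence with $c_{k_j}\to\hat c\in[\underline c,\bar c]$ and taking limits in the optimality condition \eqref{eq:opt_condition} evaluated at $(x^{k_j},c_{k_j})$—using continuity of $\nabla h$ and closedness of $C$—gives $[Q\bar x-\tilde\alpha-\nabla h(\bar x)]^T(y-\bar x)\ge0$ for all $y\in C$, i.e.\ $0\in Q\bar x-\tilde\alpha-\nabla h(\bar x)+N_C(\bar x)$, so $\bar x$ is stationary by Definition \ref{de:stationary_point}. Connectedness of the limit set is the only genuinely non-elementary ingredient: it follows from the classical fact (Ostrowski) that the cluster-point set of a bounded sequence with $\norm{x^{k+1}-x^k}\to0$ is nonempty, compact and connected; a finite connected set is a singleton, so in that case the whole sequence converges.
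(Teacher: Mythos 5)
Your proposal follows essentially the same route as the paper: telescoping the descent inequality \eqref{eq:monotone} over the level set to get \eqref{eq:bound_est}, invoking \eqref{eq:tmp_eq4} at the best iterate for \eqref{eq:descent_est}, and deducing the limit-point claims from $\norm{x^{k+1}-x^k}\to 0$ together with Ostrowski's connectedness argument. You even correctly flag the one genuinely delicate spot — the $1/\sqrt{c_{i_k}}$ factor and the $s_{c_{i_k}}(x^{i_k})$-versus-$x^{i_k}$ index shift in \eqref{eq:descent_est} — which the paper's own proof passes over with a one-line ``combining ... we obtain.''
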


\begin{proof}
Since $\mathcal{L}_{\gamma}(\gamma(x^0))$ is bounded by assumption, we have $\underline{\gamma} := \inf_{x\in\mathcal{L}_{\gamma}(\gamma(x^0))}\gamma(x)$ is well-defined due to the continuity of
$\gamma$ and the closedness and nonemptiness of $\mathcal{L}_{\gamma}(\gamma(x^0))$ (since $x^0\in\mathcal{L}_{\gamma}(\gamma(x^0))$.
From Step 3 of Algorithm \ref{alg:A1}, if either the constant parameter $c_k = \frac{1}{L_{\gamma}}$ or the line search procedure is used then it implies
\begin{equation}
\gamma(x^{k+1}) + \frac{1}{2c_k}\norm{x^{k+1}-x^k}^2 \leq m_{c_k}(x^{k+1})  + (\tilde{B}x^k-\tilde{\alpha})^Tx^k \leq \gamma(x^k),~~ \forall k\geq 0.
\end{equation}
Note that the whole sequence $\{x^k\}$ is contained in $\mathcal{L}_{\gamma}(\gamma(x^0))$.
Rearrange and sum up these inequalities for $k=0$ to $k=K$ we  get 
\begin{equation}\label{eq:sum_gradient}
\sum_{k=0}^K\frac{1}{2c_k}\norm{x^{k+1} - x^k}^2 \leq \gamma(x^0) - \gamma(x^{K+1}) \leq \gamma(x^0) - \underline{\gamma}.
\end{equation}
Then the inequality \eqref{eq:bound_est} directly follows from the  definition of $\delta$ in \eqref{eq:Delta}. Combining \eqref{eq:bound_est} and \eqref{eq:tmp_eq4} in Lemma
\ref{le:phi_properties} we  obtain \eqref{eq:descent_est}. 

To prove the remainder, taking into account Remark \ref{re:G_properties} and then passing to the limit as $k$ tends to $\infty$ the resulting inequality of \eqref{eq:sum_gradient}, we get 
\begin{equation*}
\sum_{k=0}^{\infty}\frac{1}{2\bar{c}}\norm{x^{k+1}-x^k}^2 < +\infty.
\end{equation*}
Since $\bar{c}<+\infty$, this inequality implies that $\lim_{k\to\infty}\norm{x^{k+1}-x^k} = 0$. Therefore, the set of limit points is connected. 
Combine this relation and the assumptions of boundedness of $\{x^k\}$ it is easy to show that every limit point of $\{x^k\}$ is a stationary point of \eqref{eq:ncMVIP}.  When the set of the limit
points is finite, the last statement of the theorem is proved similarly using the same technique as in \cite{Ostrowski1966}[Chapt. 28].
\end{proof}

\begin{remark}\label{re:complexity}
For a given tolerance $\varepsilon>0$,  according to Theorem \ref{th:convergence}, the number of iterations $k$ to get an $\varepsilon$-stationary point is $O(\varepsilon^2)$. Consequently, the
worst-case complexity of Algorithm \ref{alg:A1} is $O(1/\sqrt{k})$.
\end{remark}

\section{Numerical test}\label{sec:num_results}
In this section, we consider to numerical examples involving concave cost functions.
The aim of these examples is to estimate the number of iterations of Algorithm \ref{alg:A1} in a certain case compared to the worst-case complexity given in Theorem \ref{th:convergence}. In addition,
we also test the time profile of the algorithm when the size of problem increases.

The algorithm is implemented in Matlab 7.8.0 (R2009a) running on a Pentium IV PC desktop with 2.6GHz and 512Mb RAM.
We assume that the feasible set $C$ of \eqref{eq:ncMVIP} is a box in $\mathbb{R}^n$. Therefore, the convex problem \eqref{eq:convex_prob} reduces to quadratic programming. We solve this problem by
using the \texttt{quadprog} solver (a built-in Matlab solver).

\vskip 0.1cm
\noindent{\it Example 1. }
Suppose that the cost function $h_i(x_i)$ of the firm $i$ is given as $h_i(x_i) = c_i^0 + c_i \ln (1 + r_ix_i)$, where $c_i^0 \geq 0$ is the ceiling cost, $c_i > 0$ and $r_i > 0$ are given. 
The function $h$ becomes
\begin{equation}\label{eq:h_function_app}
h(x) = c^0 + \sum_{i=1}^n  c_i\ln( 1 + r_ix_i ) = c^0 + \ln \prod_{i=1}^n ( 1 + r_ix_i)^{c_i},
\end{equation}
where $c^0 = \sum_{i=1}^n c^0_i$.  
It is obvious that $h_i$ is well-defined if $x_i\geq 0$ and $h_i'(x_i) = \frac{c_ir_i}{1+r_ix_i}$, which implies that $h$ is differentiable on $C = \R^n_{+}$ and
\begin{equation}\label{eq:dh_function}
\nabla h(x) = (\frac{c_1r_1}{1+r_1x_1}, \dots, \frac{c_nr_n}{1+r_nx_n})^T.
\end{equation}  
Since $h_i''(x_i) = -c_ir_i^2/(1+r_ix_i)^2$, we have $ |h_i''(x_i)| \leq c_ir_i^2$ and $h$ is concave. Moreover, $\nabla h$ is Lipschitz continuous with the Lipschitz constant $L_h := \max\{
c_ir_i^2 ~|~ i=1,\dots, n\}$. 

In this example, we choose $ \beta = 0.1 > 0$, $ \alpha = 10$, $c_i^0 = 2$, $c_i = 1.5$ for all $i=1,\dots, n$, and $ r_i = 1 + \omega_i$, where $\omega_i$ is randomly generated in
$(0,1)$ ($i=1,\dots, n$). The strategy set of the firm $i$ is defined by $C_i := [0, 10]$ for all $i=1,\dots, n$.

We test Algorithm \ref{alg:A1} for problem \eqref{eq:ncMVIP} with the size increasing from $10$ to $1000$. The tolerance $\varepsilon$ is $10^{-3}$.
The number of iterations as well as the CPU time with respect to the size of problem is visualized in \textrm{Fig1.} and \textrm{Fig2.}, respectively. 

\begin{figure}[ht]
\begin{multicols}{2}{
\centerline{\includegraphics[angle=0,width=0.45\textwidth, height=3.2cm]{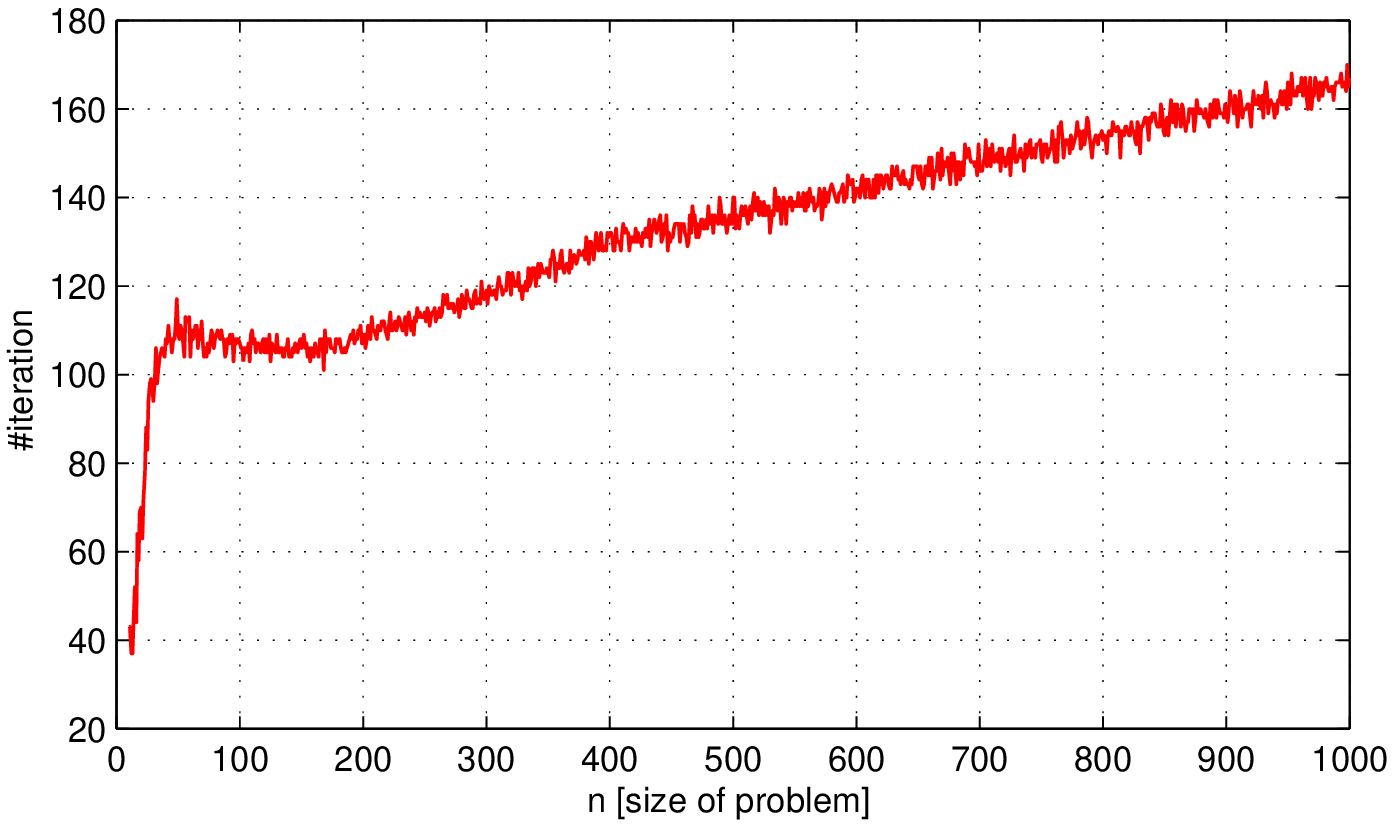}}
\label{fig:timesize1}
\centerline{\scriptsize Fig1. Number of iterations depending on $n$ [Ex. 1]}

\centerline{\includegraphics[angle=0,width=0.45\textwidth, height=3.2cm]{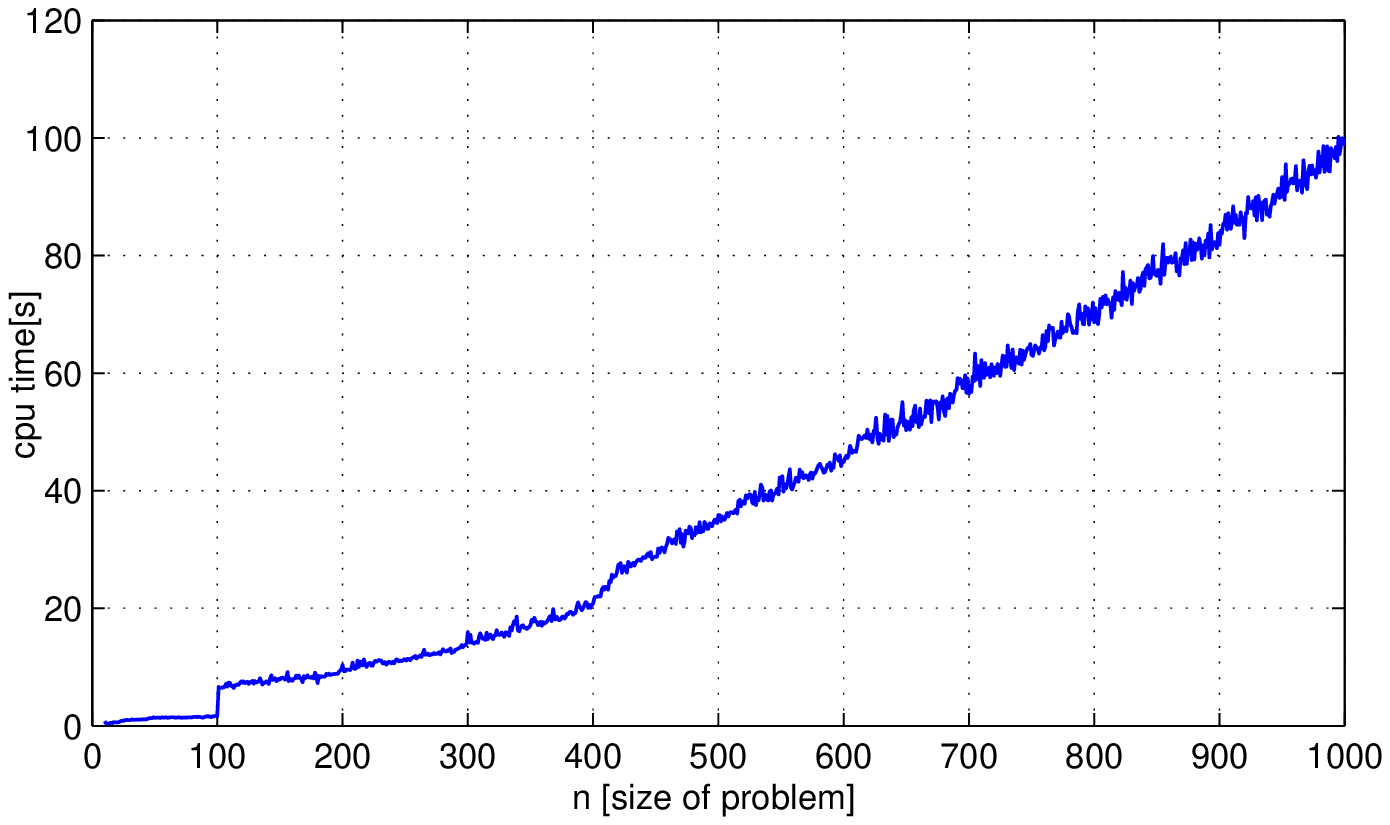}}
\label{fig:itsize1}
\centerline{\scriptsize Fig2. CPU time depending on $n$ [Ex. 1]}
}\end{multicols}
\end{figure}
From \eqref{eq:descent_est} of Theorem \ref{th:convergence}, it implies that the number of iterations $k$ to reach an $\varepsilon$-stationary point depends on the structure of the function $h$ and
the value $\gamma(x^0)-\underline{\gamma}$, $L_h$ and $\norm{\tilde B}$.
Since $h$ is a logarithm function, the value of $h$ slowly increases in $n$, while the Lipschitz constant $L_h \leq 1.5\times 2^2 = 6$ for all $n$ and the norm $\norm{\tilde B} = (n-1)\beta$.
Consequently, the worst-case complexity bound increases almost linearly in $n$. 
As can be seen from the first figure, the number of iterations increases with a small slope when the size of problem grows up. The curvature of this graph stays below a linear line generated by the
worst-case complexity bound.  The CPU time also increases almost linearly in the size of problem. 

\vskip 0.1cm
\noindent{\it Example 2. } In this example, we choose the cost function $h_i$ as $h_i(x_i) = c_i^0 -  c_i e^{-r_i x_i}$, where $c_i^0\geq c_i >0$ and $r_i>0$ given. 
It is easy to see that $h_i''(x_i) = -c_i\alpha_i^2e^{-r_ix_i} < 0$, then $h_i$ is concave. 
Since $h_i$ is differentiable on $\R$, it means that $h$ is differentiable on $\R^n$ and $\nabla h$ is expressed by
\begin{equation}\label{eq:dh_function2}
\nabla h(x) = (  c_1r_1e^{-r_1x_1}, \cdots, c_nr_ne^{-r_nx_n})^T.
\end{equation}  
We have $|h_i''(x_i)| \leq c_ir_i^2$ for all $i=1,\dots, n$, thus $\nabla h$ is Lipschitz continuous on $\R^n$ with the Lipschitz constant $L_h := \max\{c_ir_i^2 ~|~ 1\leq i\leq n\}$.

To compare with the previous example, we choose the value of the parameters $\alpha$ and $\beta$ as in Example 1. The parameters $c_i^0$ and $c_i$ are given by $c_i^0 = 4$ and $c_i = 2$ for all $i=1,
\dots, n$. The parameter $r_i := 0.1 + 0.1\textrm{rand}_i$, where $\textrm{rand}_i$ is generated randomly in $(0, 1)$.

\begin{figure}[ht]
\begin{multicols}{2}{
\centerline{\includegraphics[angle=0,width=0.45\textwidth, height=3.2cm]{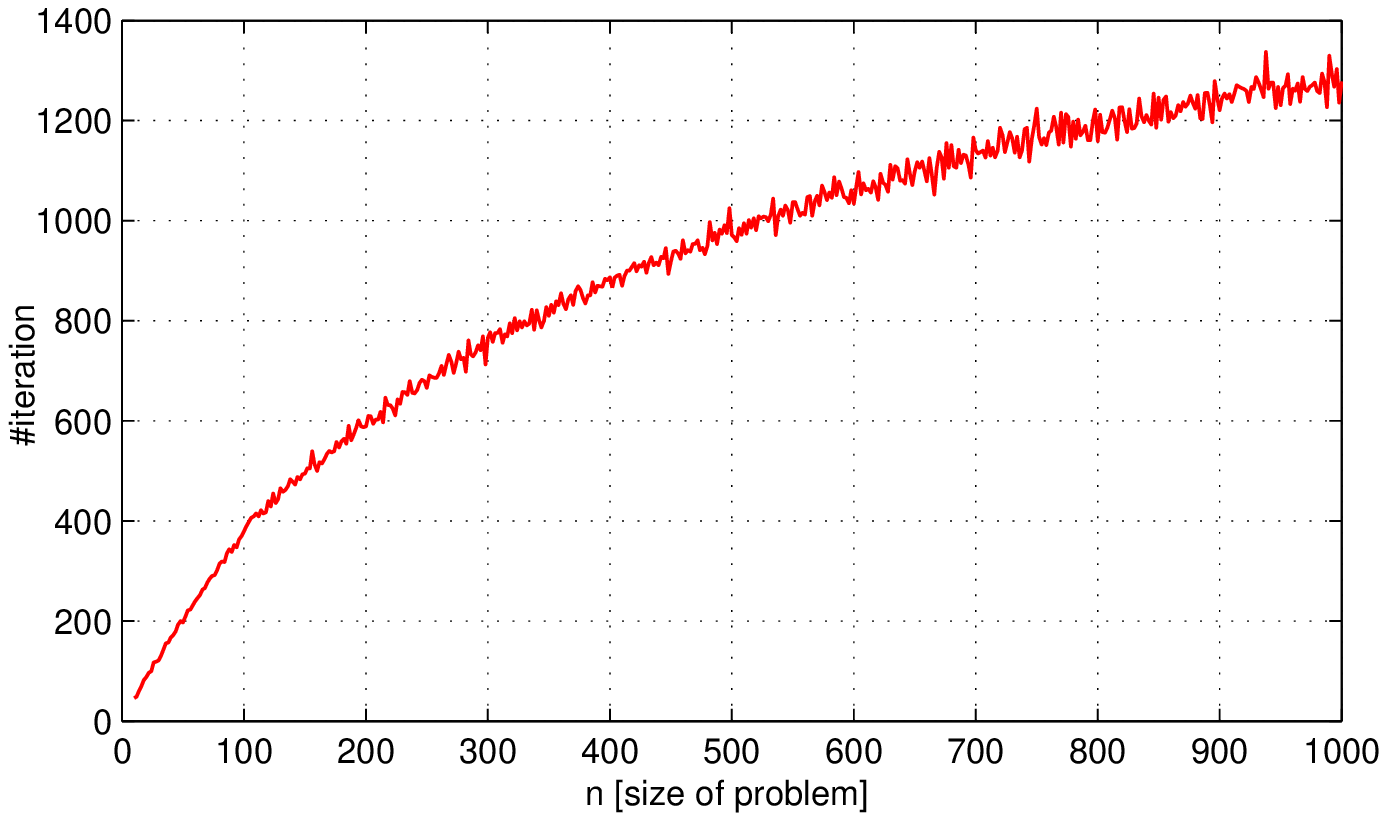}}
\label{fig:timesize2}
\centerline{\scriptsize Fig3. Iterations depending on $n$ [Ex. 2]}

\centerline{\includegraphics[angle=0,width=0.45\textwidth, height=3.2cm]{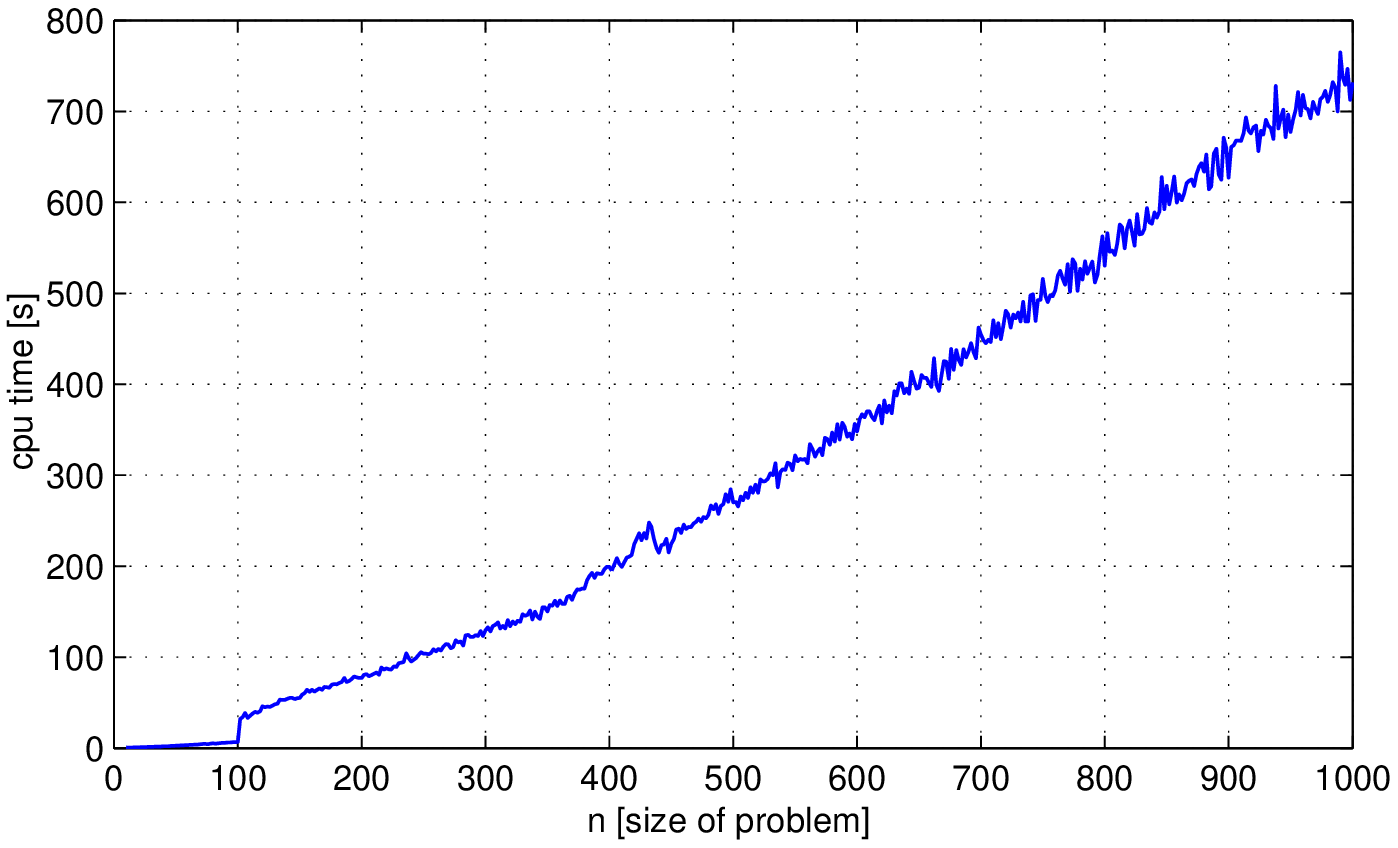}}
\label{fig:itsize2}
\centerline{\scriptsize Fig4. CPU time depending on $n$ [Ex. 2]}
}\end{multicols}
\end{figure}
We also test Algorithm \ref{alg:A1} for the problem size from $10$ to $1000$. The number of iterations and the CPU time are plotted in \textrm{Fig3.} and \textrm{Fig4}., respectively.
Since the function $\gamma$ rapidly increases in $n$ compared to the previous case, the number of iteration also increases. Consequently, the CPU time respectively increases.

\vskip 0.3cm
\noindent{\footnotesize{\textbf{Acknowledgement. }
Research supported in part by NAFOSTED, Vietnam and by Research Council KUL: CoE EF/05/006 Optimization in Engineering(OPTEC), IOF-SCORES4CHEM, GOA/10/009 (MaNet), GOA/10/11, several
PhD/postdoc and fellow grants; Flemish Government: FWO: PhD/postdoc grants, projects G.0452.04, G.0499.04, G.0211.05, G.0226.06, 
G.0321.06, G.0302.07, G.0320.08, G.0558.08, G.0557.08, G.0588.09,G.0377.09, research communities (ICCoS, ANMMM, MLDM); IWT: PhD Grants, Belgian
Federal Science Policy Office: IUAP P6/04; EU: ERNSI; FP7-HDMPC, FP7-EMBOCON, Contract Research: AMINAL. Other: Helmholtz-viCERP,
COMET-ACCM.
}}

\small{

}


\begin{thebibliography}{00}

\bibitem{Anh2005}
Anh, P.N., Muu, L.D., Nguyen, V.H. and Strodiot, J.J.: On the contraction and nonexpansiveness properties of the marginal mappings in generalized variational inequalities involving co-coercive operators, Generalized Convexity and Monotonicity, Chapter 5, 89--111  (2005).

\bibitem{Blum1994}
Blum, E. and Oettli, W.: From optimization and variational inequality to equilibrium problems, Math. Student \textbf{63} 127--149 (1994).

\bibitem{Contreras2004} 
Contreras, J. Klusch, M. and Krawczyk, J.B.: Numerical solutions to Nash-Cournot equilibria in coupled constraint electricity markets, IEEE Transactions on Power Systems \textbf{19}(1), 195--206 (2004).

\bibitem{Facchinei2003}
Facchinei, F. and Pang, J.-S.: Finite-Dimensional Variational Inequalities and Complementarity Problems, Vol. I, II, Springer-Verlag, New York (2003).

\bibitem{Fukushima1992}
Fukushima, M.: Equivalent differentiable optimization problems and descent methods for asymmetric variational inequality problems, Math. Program. \textbf{53}, 99--110  (1992).

\bibitem{Guler1991} 
G\"{u}ler, O.: On the convergence of the proximal point algorithm for convex minimization. SIAM J. Control Optim. \textbf{29}, 403--419 (1991).

\bibitem{Guler1992} 
G\"{u}ler, O.: New proximal point algorithms for convex minimization. SIAM J. Optimization, \textbf{2}(4), 649--664 (1992).

\bibitem{Harker1984}
Harker, P.: A variational inequality approach for the determination of oligopolistic market equilibrium, Mathem. Program. \textbf{30} 105--111  (1984).

\bibitem{Konnov2000}
Konnov, I.V.: Combined Relaxation Methods for Variational Inequalities, Springer-Verlag, Berlin (2000).

\bibitem{Konnov2001}
Konnov, I.V. and Kum, S.: Descent methods for mixed variational inequalities in Hilbert spaces, Nonlinear Analysis: Theory, methods and applications \textbf{47}, 561--572 (2001).

\bibitem{Lewis2008} 
Lewis, A.S. and Wright, S.J.: A proximal method for composite minimization. http://arxiv.org/abs/0812.0423, 1--32 (2008).

\bibitem{Martinet1970}
Martinet, B.: R\'{e}gularisation d'{i}n\'{e}quations variationelles par approximations successives. Rev. Fran\c{c}aise Informat. et Recherche Op\'{e}rationnelle, \textbf{4}, 154--159 (1970).

\bibitem{Mine1981}
Mine, H. and Fukushima, M.: A minimization method for the sum of a convex function and a continuously differentiable function, J. Optim. Theory Appl. \textbf{33}(1) 9--23 (1981).

\bibitem{Murphy1982}
Murphy, F., Sherali, H. and Soyster, A.: A mathematical programming approach for determining oligopolistic market equilibrium, Math. Program. \textbf{24}, 92--106 (1982).

\bibitem{Muu2008}
Muu, L. D, Nguyen, V.H. and Quy, N.V.: On Nash-Cournot oligopolistic market equilibrium models with concave cost functions, J. Glob. Optim. \textbf{41} (3), 351--364 (2008).

\bibitem{Muu2009}
Muu, L.D. amd Quoc, T.D.: Regularization Algorithms for Solving Monotone Ky Fan Inequalities with Application to a Nash-Cournot Equilibrium Model, J. Optim. Theory Appl. \textbf{142}(1), 185--204 (2009).

\bibitem{Nesterov1996}
Nesterov, Y.: Gradient methods for minimizing composite objective function, CORE discussion paper, 1--31 (1996).

\bibitem{Nesterov2004} 
Nesterov, Y.: Introductory lectures on convex optimization: A basic course. Kluwer Academic Publishers (2004).

\bibitem{Noor2001}
Noor, M: Iterative schemes for quasi-monotone mixed variational inequalities, Optimization \textbf{50}, 29--44 (2001).

\bibitem{Ostrowski1966} 
Ostrowski, A.M.: Solutions of Equations and Systems of Equations. Academic Press, New York (1966).

\bibitem{Pennanen2002} 
Pennanen, T.: Local convergence of the proximal point algorithm and multiplier methods without monotonicity. Math. Operation Research. \textbf{27}, 170--191 (2002).

\bibitem{Rockafellar1976} 
Rockafellar, R.T.: Monotone operators and the proximal point algorithm. SIAM J. Control Optim. \textbf{14}, 877--898 (1976).
 
\bibitem{Rockafellar1997} 
Rockafellar, T.R. and Wets, R. J-B.: Variational Analysis. Springer-Verlag, New York (1997).

\bibitem{Salmon2004}
Salmon, G., Strodiot, J. J. and Nguyen,  V. H.: A bundle method for solving variational inequalities, SIAM J. Optim. \textbf{14}, 869--893 (2004).

\end{thebibliography}
\end{document}